\newcommand{\R}{\mathbb{R}}
\newcommand{\N}{\mathbb{N}}
\newcommand{\m}{\mathfrak{m}}
\newcommand{\VF}{\mathfrak{X}}
\newcommand{\Dcal}{\mathcal{D}}
\newcommand{\Ecal}{\mathcal{E}}
\newcommand{\Jcal}{\mathcal{J}}
\newcommand{\Pcal}{\mathcal{P}}
\DeclareMathOperator{\vol}{vol}
\DeclareMathOperator{\Riem}{Riem}
\DeclareMathOperator{\Ric}{Ric}
\DeclareMathOperator{\Exp}{Exp}
\DeclareMathOperator{\Log}{Log}
\DeclareMathOperator{\Tr}{Tr}
\DeclareMathOperator{\Id}{Id}
\DeclareMathOperator{\BM}{\mathsf{BM}}
\DeclareMathOperator{\CD}{\mathsf{CD}}
\DeclareMathOperator{\TBM}{\mathsf{TBM}}
\DeclareMathOperator{\TCD}{\mathsf{TCD}}
\DeclareMathOperator{\Ent}{Ent}
\renewcommand{\epsilon}{\varepsilon}
\newtheorem{theorem}{Theorem}[section]
\newtheorem{lemma}[theorem]{Lemma}
\newtheorem{prop}[theorem]{Proposition}
\theoremstyle{definition}
\newtheorem{defn}{Definition}[section]
\title{Equivalence between the timelike Brunn-Minkowski inequality and timelike Bakry-\'Emery-Ricci lower bound on weighted globally hyperbolic spacetimes} 
\author{Osama Farooqui}
\date{}
\begin{document}
\maketitle
{\hypersetup{linkcolor = Black}
\tableofcontents}
\abstract{We prove the timelike Brunn-Minkowski inequality $\mathsf{TBM}(K,N)$ implies a timelike lower bound on the Bakry-\'Emery-Ricci curvature on weighted globally hyperbolic spacetimes.  This result, together with the well-known equivalence between timelike Bakry-\'Emery-Ricci lower bounds and the $\mathsf{TCD}(K,N)$ condition, and the fact that $\mathsf{TCD}(K,N)$ spaces support the timelike Brunn-Minkowski inequality, draws an equivalence between $\mathsf{TBM}(K,N)$ and $\mathsf{TCD}(K,N)$ in the smooth setting.} 

\section{Introduction}
The purpose of this paper is to investigate the correspondence between Ricci curvature and volume distortion along geodesic interpolation. In particular, our interests lie in an equivalent characterization of Ricci lower bounds in terms of the transport of measures on probability space, especially as this characterization makes sense on more abstract, non-smooth spaces. 

In his 1994 PhD thesis (later published in 1997) McCann \cite{McCann97} introduced the notion of displacement convexity for functionals over the space of probability measures, and showed that the Boltzmann-Shannon entropy $\Ent(\mu) := \int \log(d\mu/d\vol_n)d\mu$. was displacement convex on $\R^n$. Some time after, Otto and Villani \cite{OttoVillani} heuristically showed that, on Riemannian manifolds, Ricci lower bounds imply a kind of convexity of the Boltzmann-Shannon entropy, suitably modified to account for the Ricci lower bound $k$ and the dimension $n$. In particular when $k=0$, the notion coincides with McCann's displacement convexity. The heuristic of Otto and Villani was proved rigorously using techniques of optimal transport by Cordero-Erausquin, McCann, and Schmuckenschl\"ager \cite{BBL} for $\Ric\geq 0$. Sturm and von Renesse \cite{RenesseSturm} later proved that Ricci lower bounds are in fact equivalent to displacement convexity of $\Ent$. 

Following this development, Sturm (using $\Ent$ in \cite{Sturm1} and the Reny\'i entropy $U_N(\mu):= -\int (d\mu/d\m)^{-1/N} d\mu$ in \cite{Sturm2}) used this equivalence as a definition of Ricci curvature on metric measure spaces $(X,d,\m)$. Independently, Lott and Villani \cite{LottVillani} came to the same conclusion, using a much broader family of entropies. Following \cite{Sturm2}, metric measure spaces satisfying the Reny\'i-entropic convexity are said to satisfy the {\it $(K,N)$-curvature-dimension condition}, and are called $\CD(K,N)$ spaces. Here $K$ plays the role of lower bound on curvature, and $N$ an upper bound on the dimension. Those satisfying Boltzmann-Shannon-entropic convexity as said to be {\it entropic $\CD(K,N)$}, or $\CD^e(K,N)$. On smooth manifolds the notions are the same; we make the distinction here only to give historical context. We remark that there are many related kinds of curvature dimension conditions, which can be obtained by considering different kinds of convexity for different kinds of entropy. We refer generally to this family as $\CD$ spaces. 

Much can be said about $\CD$ spaces. In particular, and relevant to our interests, is the existence of many well known geometric and functional inequalities \cite[Ch.\ 18--21]{Villani09}. This includes the Brunn-Minkowski inequality, suitably modified to account for the dimension and curvature bounds.

Very recently, Magnabosco, Portinale, and Rossi \cite{MPR1, MPR2} proved in a pair of papers  that weighted Riemannian manifolds that support the Brunn-Minkowski inequality also support Ricci lower bounds. That is, the Brunn-Minkowski condition $\BM(K,N)$ is in fact equivalent to the $\CD(K,N)$ condition in the smooth setting. In the non-smooth essentially non-branching setting, they showed that a stronger form of $\BM(K,N)$ is equivalent to $\CD(K,N)$.

The entirety of the discussion so far has been concerned with Riemannian geometry, but Ricci curvature is of interest also in mathematical relativity and Lorentzian geometry. Via Einstein's equations, Ricci curvature encodes the matter and energy content of spacetime. Thus, on physical grounds, the bulk of study is concentrated on {\it timelike} Ricci lower bounds, which encapsulate lower bounds on energy density; they appear for example in the energy conditions of Hawking and Penrose (see \cite{Steinbauer2022} and references therein for an overview). 

Motivated by this, and a potential thermodynamical connection, McCann \cite{McCann2020}, and independently, Mondino and Suhr \cite{MondinoSuhr}, developed a theory of optimal transport on Lorentzian manifolds, proved an equivalence between timelike Ricci lower bounds and timelike entropic convexity, and thus introduced, in the smooth setting, the {\it timelike entropic (K,N)-curvature-dimension condition} $\TCD^e(K,N)$. 

Cavalletti and Mondino \cite{CavallettiMondino} were the first to generalize these results to non-smooth {\it Lorentzian length spaces}, a Lorentzian analog of metric measure spaces developed some years prior by Kunzinger and S\"amann \cite{KunzingerSamann}. On $\TCD^e(K,N)$ spaces, they obtained a kind of Brunn-Minkowski inequality, albeit with weaker coefficients. Later, Braun \cite{Braun} developed the $\TCD(K,N)$ condition on Lorentzian length spaces (using the Reny\'i entropy) and obtained a stronger (timelike) Brunn-Minkowski property, which we refer to as $\TBM(K,N)$. 

The path forward is clear: following in the footsteps of Magnabosco, Portinale, and Rossi, we will prove an equivalence between $\TCD(K,N)$ and $\TBM(K,N)$ in the smooth setting. Precisely we prove, by contrapositive, the following:
\begin{theorem}
Let $(M,g, \m)$ be a weighted \textup{(eq.\ \eqref{weighted})} globally hyperbolic spacetime of dimension $n$. Suppose for some $K\in \R$, $N>1$, $M\in \TBM(K,N)$ \textup{(defn. \ref{TBM})}. Then $\Ric^{N,\m}\geq K$ \textup{(eq.\ \eqref{bakryemeryricci})}. 
\end{theorem}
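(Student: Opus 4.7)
I will argue by contrapositive, adapting the Riemannian strategy of Magnabosco, Portinale, and Rossi \cite{MPR1} to the Lorentzian setting. Suppose there exist $p \in M$ and a future-directed timelike unit vector $v \in T_pM$ with $\Ric^{N,\m}_p(v,v) < K$; by smoothness, fix $\delta > 0$ so that $\Ric^{N,\m}(w,w) \leq K - \delta$ uniformly on a neighborhood $\mathcal V$ of $(p,v)$ in the future unit tangent bundle. The task is to construct a pair of timelike-related Borel sets $(A_0, A_1)$ whose $t$-intermediate set $Z_t(A_0, A_1)$ has $\m$-measure strictly smaller than the lower bound demanded by $\TBM(K,N)$.

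The test sets will be built in a convex normal neighborhood of a short timelike geodesic $\gamma(s) = \Exp_p(sv)$, $s \in [0,\tau]$, using a parallel-transported orthonormal frame $\{\gamma', e_1, \dots, e_{n-1}\}$. In the associated Fermi coordinates I would take $A_0$ to be a small open slab of longitudinal thickness $r$ and transverse radius $\rho$ centered at $\gamma(s_0)$, and $A_1$ its image under the geodesic flow composed with parallel transport, centered at $\gamma(\tau - s_0)$, with $r, \rho \ll s_0 \ll \tau$. Global hyperbolicity and the smallness of all parameters then guarantee that every pair $(x_0, x_1) \in A_0 \times A_1$ is joined by a unique maximizing timelike geodesic close to $\gamma$, and that $Z_t(A_0, A_1)$ coincides with the image at time $t$ of this family of geodesics, whose $\m$-measure is computable from the weighted Jacobi-field flow along $\gamma$.

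Both sides of the $\TBM(K,N)$ inequality may now be Taylor-expanded in $\tau$. On the geometric side, writing $\m = e^{-V}\,d\vol_g$ and tracking the weighted Jacobian $J_\m(s)$ of the interpolation along $\gamma$, the weighted Raychaudhuri/Bochner identity shows that $J_\m^{1/N}$ satisfies a linear comparison ODE whose zeroth-order coefficient is $-\Ric^{N,\m}(\gamma',\gamma')/N$ up to controlled remainders. On the analytic side, the distortion coefficients $\sigma_{K/N}^{(t)}(\tau)$ appearing in $\TBM$ expand to second order in $\tau$ with coefficient governed by $K/N$. With $A_0$ and $A_1$ chosen symmetric so that $\m(A_0) = \m(A_1)$ to leading order, the first-order terms cancel; matching the two expansions at order $\tau^2$ and then sending $\tau, r, \rho, s_0 \to 0$ forces the pointwise inequality $\Ric^{N,\m}_p(v,v) \geq K$, contradicting the hypothesis.

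The principal technical obstacle lies in the second step. In the Riemannian case of \cite{MPR1} one can take $A_0$ and $A_1$ as small geodesic balls around the endpoints of a short segment; in the Lorentzian setting nearby points are generically not timelike-related, so the test sets must live in transverse spacelike slabs along $\gamma$. This forces careful bookkeeping to verify (i) that $Z_t(A_0, A_1)$ is realized exactly by the geodesic interpolation with no spurious chronological midpoints, (ii) that the Jacobian expansion carries uniform control of both longitudinal and transverse contributions, and (iii) that the infimum of the Lorentzian time separation $\tau(\cdot,\cdot)$ on $A_0 \times A_1$, which enters the $\TBM$ distortion coefficients, equals $\tau - 2s_0 + o(1)$ as the parameters vanish. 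Once these calibrations are set, the Taylor matching at order $\tau^2$ is essentially a Lorentzian transcription of the corresponding step in \cite{MPR1, MPR2}.
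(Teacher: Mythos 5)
There is a genuine gap, and it sits exactly where you wave it away with ``up to controlled remainders.'' Writing $\m=e^{-\psi}\vol_g$, the weighted Raychaudhuri computation for an interpolating congruence with shape operator $B$ gives
\[
\frac{(J_\m^{1/N})''}{J_\m^{1/N}}=-\frac{1}{N}\Big[\Ric^{N,\m}(\gamma',\gamma')+\big(\Tr(B^2)-\tfrac1n\Tr^2B\big)+\tfrac{n}{N(N-n)}\big(D\psi(\gamma')+\tfrac{N-n}{n}\Tr B\big)^2\Big],
\]
and the two remainder terms are \emph{nonnegative}; they vanish only when $B=-\frac{D\psi(\gamma')}{N-n}\Id$. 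That sign is the right one for the implication $\TCD\Rightarrow\TBM$, but it is the wrong one for your contrapositive: your $A_1$ is the image of $A_0$ under geodesic flow composed with parallel transport, whose congruence has $B\approx 0$, so the remainder is $\approx\frac{n}{N(N-n)}\,(D\psi(v))^2$, a quantity that does not tend to zero as $\tau,r,\rho,s_0\to0$. The effective curvature your expansion sees is therefore $\Ric+D^2\psi-\frac{(D\psi(v))^2}{N}\ \geq\ \Ric^{N,\m}(v,v)$, and if the assumed failure $\Ric^{N,\m}(v,v)<K$ is caused by the $-\frac{D\psi\otimes D\psi}{N-n}$ term, no violation of $\TBM(K,N)$ is produced and the Taylor matching yields only a weaker bound than claimed. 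This is why the paper (and, contrary to your description, also \cite{MPR1}: they do not use two balls at the ends of a segment) takes $B=T_\lambda(A)$ with $T_\lambda=\exp(\lambda\nabla\phi)$, where $\phi$ is tuned so that $\nabla\phi(x_0)=v_0$ and $D\nabla\phi(x_0)=-\frac{D\psi(v_0)}{N-n}\Id$; that dilation is what makes the error term $\Ecal_{x_0}$ vanish at leading order and lets the genuine $\Ric^{N,\m}$ deficit of size $\epsilon$ survive into the $\sigma$-coefficients.

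The second gap is your treatment of $\m(Z_t(A_0,A_1))$ as ``computable from the weighted Jacobi-field flow along $\gamma$.'' The Jacobian of the flow only measures the one-to-one image $T_{\lambda t}(A_0)$, which is a subset of the set of all $t$-midpoints $F_t(A_0\times A_1)$; for the contradiction you need an \emph{upper} bound on the latter whose excess over the former is beaten by the $\epsilon\lambda^2$-sized gain, and this many-to-many inflation -- not the absence of ``spurious chronological midpoints'' -- is the real obstacle. In the paper this is handled by (i) taking $A_\delta$ to be a cube aligned with the eigenvectors of $R_{v_0}=\Riem(v_0,\cdot)v_0$, so that after the second-order expansion of $DF_t$ (Proposition \ref{DF_s}) the linearized midpoint set is a Minkowski sum $M_1A_\delta+M_2A_\delta=(M_1+M_2)A_\delta$ with no inflation, and (ii) a Steiner-formula estimate (Proposition \ref{Tballestimate}) showing the $\delta\rho$-neighbourhood costs only $O(\delta^{n/N}(\delta+\lambda^3))$, which is then dominated by choosing $\delta\sim\lambda^3$. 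With round balls or generic slabs, as you propose, the Minkowski sum of the two non-homothetic linear images strictly inflates, and you would at minimum need a quantitative argument that this inflation is $o(\lambda^2)$ relative; nothing in your plan supplies this, nor the scaling relation between set size and separation that closes the bookkeeping.
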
 

The proof follows its Riemannian analog in \cite{MPR1}, necessarily adapted to the Lorentzian setting.

\section{Preliminaries}
\subsection{Geodesics, curvature, and volume}\label{geometrysection}
Let $(M,g)$ be a $C^2$-smooth Lorentzian manifold of dimension $n\in\N$ with signature $(+,-,\dots,-)$ Note that on any smooth manifold there is a Riemannian metric which induces the manifold topology. The metric $g$ induces a Levi-Civita connection, from which all other relevant geometric quantities arise.  We use the symbol $D$ to denote covariant differentiation with respect to this connection, and reserve $\nabla$ to denote the gradient. In an abuse of notation, we use the same symbol, $D$, to denote exterior differentiation. We hope it is clear from context which definition we mean.  

For any $x\in M$ we may define the {\it exponential map at $x$ }, $\exp_x:T_xM\to M$, which is the solution map to the initial value geodesic problem 
\begin{align}\label{geodesicivp}
\begin{cases}
D_{\dot \gamma}\dot\gamma=0;\\
\gamma_0=x;\\
\dot\gamma_0=v,
\end{cases}
\end{align}
for some specified $v\in T_{x}M$. That is, $\gamma_t\coloneqq \exp_x(tv)$ is a short time solution to \eqref{geodesicivp}, called an {\it (affinely parametrized) geodesic}. For us the term {\it geodesic} will always mean a geodesic affinely parametrized along $[0,1]$. The map $\exp_x$ is locally a diffeomorphism, and its local inverse is denoted by $\log_x$. As a local diffeomorphism, $\exp_x$ can be used a chart near the point $x$, once a choice of isomorphism $\phi: T_xM\cong \R^n$ is made. An open set $U\ni x$ is a {\it normal neighbourhood} about $x$ if $\phi\circ \log_x :U\to V\subset \R^n$ is a diffeomorphism. The charts $\phi \circ \log_x$ are referred to as {\it normal coordinates}. A normal neighbourhood is convex if it is a normal neighbourhood about $x$ for every $x\in U$. 

In a normal neighbourhood $U\subset TM$ about any point $(x,0)$, we may introduce the {\it exponential map} $\exp:U\to M$ as 
\[\exp(x,v)\coloneqq \exp_x(v),\]
as well as the map $\Exp:U\to M\times M$, given by 

\begin{align}
\Exp(x,v)\coloneqq (x, \exp_x(v)).
\end{align}
In a small neighbourhood about any point $(x,0)\in TM$, $\Exp$ is well defined and a local diffeomorphism. Its inverse is denoted by $\Log$. It suffices for our purposes to define $\Log$ in a neighbourhood of some special point $(x_0, 0)\in TM$, defined in section \ref{Resultsection}.

$\Log(x,y)$ can be understood as the vector on $TM$ which `points $x$ in the direction of $y$'. On $\R^n$, $\Log(x,y)=(x,y-x)$. It can also be understood via the boundary value geodesic problem 
\begin{align}\label{geodesicbvp}
\begin{cases}
D_{\dot \gamma}\dot\gamma=0;\\
\gamma_0=x;\\
\gamma_1=y.
\end{cases}
\end{align}
Whenever a solution to \eqref{geodesicbvp} exists and is unique, it is given by $\gamma_t=F_t(x,y)\coloneqq \exp(t\Log(x,y))= \exp_x(t\log_x(y))$. In this sense, the domain of $\Log$ consists of all pairs of points $(x,y)$ such that there exists a unique geodesic joining $x$ to $y$.

We denote the set of $C^1$ vector fields on $M$ by $\VF(M)$. The connection induces the {\it Riemann tensor} $\Riem:\VF(M)\times \VF(M)\times \VF(M)\to \VF(M)$ defined by 
\begin{align*}
\Riem(X,Y)Z=[D_X,D_Y]Z-D_{[X,Y]}Z.
\end{align*}

The Riemann tensor captures the curvature data of the manifold. One way to extract this data is through {\it Jacobi fields} -- vector fields $J(t)$ defined over a geodesic $\gamma_t$ which solve the linear second order ODE
\begin{align}
\begin{cases}
D_{\dot\gamma}D_{\dot \gamma} J-\Riem(J, \dot\gamma)\dot\gamma=0;\\
J(0)=v;\\
\dot J(0)=w,
\end{cases}
\end{align}
for some initial pair of vectors $(v,w)\in T_{(\gamma_0, \dot\gamma_0)}TM$. It is a remarkable fact that the solution map of this initial value system is $(D\exp)_{(\gamma, \dot \gamma)}$, i.e $J(t)=  (D\exp)_{(\gamma, \dot \gamma)}(v, tw)$. 
Likewise, recalling the map $F_t(x,y)= \exp(t\Log(x,y))$, $DF_t$ is a solution map of the {\it boundary value} Jacobi equation; $J(t)=(DF_t)_{(x,y)}(v,w)$ is a Jacobi field along the geodesic $F_t(x,y)$ which solves 
\begin{align}
\begin{cases}
D_{\dot F_t(x,y)}D_{\dot F_t(x,y)} J-\Riem(J, \dot F_t(x,y))\dot F_t(x,y)=0;\\
J(0)=v;\\
J(1)=w.
\end{cases}
\end{align}

Next, by taking traces of the Riemann tensor, one defines the {\it Ricci tensor} $\Ric:\VF(M)\times \VF(M)\to \R$ as
\[\Ric(X,Y)\coloneqq \Tr[ Z\mapsto \Riem(X,Z)Y].\]

This is a symmetric linear operator, and is therefore completely defined by its quadratic form $X\mapsto \Ric(X,X)$. We say $M$ has a Ricci lower bound $K\in\R$ if the quadratic form is bounded from below by $K$, i.e. if 
\[\Ric(X,X)\geq  Kg(X,X)\quad \forall X\in \VF(M).\]

The metric $g$ endows $M$ with a natural measure $\vol_g$, which in coordinates is of the form $d\vol_g= |\det g|^{1/2}dx$. We say $M$ is {\it weighted} if it is equipped with a measure $\m$ absolutely continuous with respect to $\vol_g$. By convention, we write its density as
\begin{align}\label{weighted}
d\m=e^{-\psi}d\vol_g,
\end{align}
for some $\psi\in C^2(M)$. On a weighted $n$-manifold, and for any $N\neq n$, we may define the {\it $N$-Bakry-\'Emery-Ricci tensor} $\Ric^{N,\m}$ as 
\begin{align}\label{bakryemeryricci}
\Ric^{N,\m}\coloneqq \Ric+D^2\psi-\frac{D\psi\otimes D\psi}{N-n}. 
\end{align}
In the case $N=n$, we assume $\psi\equiv0$, so that $\Ric^{n,\m}= \Ric$. 
\subsection{The causal structure of Lorentzian manifolds}\label{Lorentziansection}

Due to the indefinite inner product induced by $g$ on the tangent bundle $TM$, vectors on $M$ are of three types. We say that $v\in TM$ is:
\begin{enumerate}
\item {\it timelike} if $g(v,v)>0$; 
\item {\it lightlike} if $g(v,v)=0$;
\item {\it spacelike} if $g(v,v)<0$.
\end{enumerate}
At each point $x\in M$, the set of timelike vectors on $T_xM$ has two connected components. WLOG we denote one component as the {\it future direction} of $T_xM$, and the other as the {\it past direction}. $M$ is said to be time orientable if there is a smooth choice of future direction on the whole bundle $TM$.  A {\it spacetime} is any smooth, connected, time orientable Lorentzian manifold.

A $C^1$ curve $\gamma_t$ is called timelike if its tangent vector $\dot\gamma_t$ is timelike for all $t$. Similarly it may be called lightlike or spacelike whenever $\dot\gamma_t$ is. A curve is {\it causal} if it is non-spacelike at every point $t$. Timelike and causal curves can also be given the designation {\it future-directed} or {\it past-directed} whenever $\dot\gamma_t$ is in the future direction or past direction respectively. 

Such curves give a notion of causal order on the spacetime. We say a point $y\in M$ is in the causal future of $x$ if there is a future-directed causal curve which begins at $x$ and passes through $y$. We write $x\leq y$ whenever this occurs. Similarly we say that $y$ is in the timelike future of $x$, and write $x\ll y$ whenever the joining curve is timelike.

On a spacetime $M$ we may define the Lagrangian 
\begin{align}
L_q(v)\coloneqq \begin{cases}
\frac1qg^q(v,v) & \text{$v$ is future directed causal};\\[2mm]
-\infty & \text{otherwise},
\end{cases}
\end{align}
for $0<q< 1$. The associated action on Lipschitz curves $\gamma_t\in C^{0,1}([0,1],M)$ is then
\begin{align}
A_q[\gamma_t]\coloneqq \int_0^1 L_q(\dot\gamma_t)dt.
\end{align}
Define then the map $\ell(x,y;q)$ given by extremizing the action over Lipschitz curves with fixed endpoints.
\begin{align}\label{ell}
\ell(x,y; q)\coloneqq \sup\left\{A_q[\gamma_t]: \gamma\in C^{0,1}([0,1],M), \gamma_0=x, \gamma_1=y\right\},
\end{align}
with convention $(-\infty)^r=-\infty$ for all $r>0$. It happens that the map $\ell(x,y)\coloneqq (q \ell(x,y;q))^q$ is independent of $q$; we call $\ell(x,y)$ the {\it time separation function}.

The time separation function serves as an analogue of the metric distance in the Riemannian setting, while also respecting the causal structure of $M$. In the language of general relativity, $\ell(x,y)$ measures the maximal proper time that may elapse in journeying from event $x$ to $y$. It can be seen that $x\leq y\iff \ell(x,y)\geq 0$ and $x\ll y\iff \ell(x,y)>0$.   If a timelike curve $\gamma_t$ realizing the supremum in \eqref{ell} exists, then it must be a geodesic (affinely parametrized along $[0,1]$). One can then conclude that, if $x,y$ are in the domain of $\Log$, then 
\[\ell(x,y)=|\dot\gamma_t|_g=|\Log(x,y)|_g=|\log_x(y)|_g.\]

One can guarantee that the supremum in \eqref{ell} is attained under the assumption of {\it global hyperbolicity} of $M$. A spacetime $M$ is globally hyperbolic if $M$ does not contain closed causal curves, and the sets
\begin{align}\label{GlobalHyperbolicity}
J(x,y)\coloneqq \{z\in M: x\leq z\leq y\},
\end{align}
are compact. 
Global hyperbolicity implies timelike geodesy i.e. for any $x\ll y$, there exists a timelike geodesic $\gamma_t$ joining $x$ to $y$. Notice that the geodesic may not be unique, hence $(x,y)$ may not be in the domain of $\Log$. 

The time separation $\ell(x,y)$ is smooth precisely on the complement of the singular set $\operatorname{sing}(\ell)$; that is, it is smooth on the set of all pairs $(x,y)\in M\times M$ such that $\ell(x,y)>0$, and both $x$ and $y$ lie on the relative interior of such a maximizing geodesic described above in \eqref{ell} \cite[Defn.\ 2.1, Thm.\ 3.6]{McCann2020}.

We remark here that the term `time separation function' carries different definitions in the literature. It may also refer to the function $\tau$ (as in \cite{CavallettiMondino}) which may be defined, with respect to what was introduced above, as 
\[\tau(x,y)\coloneqq \ell^+(x,y)\coloneqq \max\{\ell(x,y), 0\}.\]
The function $\tau$ measures time separation just as $\ell$, but cannot distinguish between lightlike and spacelike points. Nevertheless it can be used in replacement of $\ell$ in everything that follows, mutatis mutandis, and in fact will be used in the definition of the timelike Brunn-Minkowski inequality. We will however refer to it by the symbol $\ell^+$ so as not the confuse the reader with the {\it distortion coefficient}, also labelled $\tau$, defined below.

\subsection{Optimal transport on globally hyperbolic spacetimes}\label{OTsection}
We denote by $\Pcal(M)$ the set of Borel probability measures on $M$. On $\Pcal(M)$ we may define the {\it q-Lorentz-Wasserstein time separation} for any $0<q< 1$:
\begin{align}\label{ell_qdefn}
\ell_q(\mu,\nu)\coloneqq \left(\sup_{\gamma \in \Gamma(\mu,\nu)}\int_{M\times M} \ell^q(x,y)d\gamma(x,y)\right)^{1/q},
\end{align}
where $\Gamma(\mu,\nu)$ denotes the set of admissible couplings, 
\[\Gamma(\mu,\nu)\coloneqq\{\gamma\in \Pcal(M\times M): \pi^1_\# \gamma=\mu, \pi^2_\# \gamma=\nu\}.\]

The maps $\pi^i$ are the projection maps $M\times M\to M$ onto the $i$-th component, for $i=1,2$. 
We say that the pair $(\mu,\nu)$ are {\it timelike q-dualizable} if the following hold: 
\begin{enumerate}
\item $\ell_q(\mu,\nu)\neq -\infty$;
\item There exists lower-semicontinuous $a,b:M\to \R$ with $a\oplus b\in L^1(\mu\otimes \nu)$ and $\ell^q\leq a\oplus b$ for all $(x,y)\in \operatorname{spt}(\mu)\times \operatorname{spt}(\nu)$;
\item There exists $\gamma \in \Gamma (\mu,\nu)$ which is concentrated on $\ell^{-1}(0,\infty)$ and attains the supremum in \eqref{ell_qdefn}.
\end{enumerate}

We say a pair of (measurable) sets $A,B\subset M$ are timelike q-dualizable if their uniform measures $\mu=\frac{\m \vert_{A}}{\m(A)}$ and $\nu=\frac{\m\vert_B}{\m(B)}$ are.

\subsection{The timelike Brunn-Minkowski inequality}

For any $k\in \R$, the {\it generalized sine function} $\sin_k(t)$ is the solution to the initial value problem 
\begin{align}\label{generalizedsine}
\begin{cases}
\ddot f(t)+k f(t)=0;\\
f(0)=0;\\
\dot f(0)=1.
\end{cases}
\end{align}

Consider now the boundary value problem 
\begin{align}\label{generalizedsinebvp}
\begin{cases}
\ddot f(t)+k\theta^2 f(t)=0;\\
f(0)=0;\\
f(1)=1.
\end{cases}
\end{align}

When $k\theta^2<\pi^2$, \eqref{generalizedsinebvp} has a unique non-negative solution, positive on $(0,1]$, given by $\frac{\sin_k(t\theta)}{\sin_k(\theta)}$. We define the {\it reduced distortion coefficient} $\sigma_k^{(t)}(\theta)$ as
\begin{align}\label{distortioncoeff}
\sigma_k^{(t)}(\theta)\coloneqq \begin{cases}
\frac{\sin_k(t\theta )}{\sin_k(\theta)} & k\theta^2<\pi^2; \\
+\infty & \text{otherwise}.
\end{cases}
\end{align}

The function $\sigma_k^{(t)}(\theta)$ appears naturally in the integral version of the differential inequality
\begin{align*}
\ddot f(t)+kf(t)\geq 0.
\end{align*} 

In fact we have the following theorem \cites[Lemma 3.2]{MPR1}[cf.][Thm.\ 14.28]{Villani09}, which in the case $k=0$ gives two well-known characterizations of convexity:
\begin{theorem} \label{theorem1}
Let $f(t):\R\to [0,\infty)$ be $C^2$-smooth. Then the following are equivalent: 
\begin{enumerate}
\item $\ddot f+k f\geq 0$;
\item  for all $a\leq b$: 
$$f((1-t)a+tb)\leq \sigma_{k}^{(1-t)}(b-a)f(a)+\sigma_{k}^{(t)}(b-a)f(b)\quad \forall  \ 0\leq t\leq 1.$$ 
\end{enumerate}
\end{theorem}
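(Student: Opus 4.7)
The plan is to recognize the right-hand side of inequality (2) as the evaluation of a solution to a linear Dirichlet problem. For $a<b$ with $k(b-a)^2<\pi^2$ (when $k(b-a)^2\geq \pi^2$ the inequality is vacuous by the $+\infty$ convention in \eqref{distortioncoeff}), the function
\[
h(s)\coloneqq \frac{\sin_k(b-s)}{\sin_k(b-a)}f(a)+\frac{\sin_k(s-a)}{\sin_k(b-a)}f(b)
\]
is the unique $C^2$ solution on $[a,b]$ of $\ddot h+kh=0$ with $h(a)=f(a)$, $h(b)=f(b)$, and evaluating at $s=(1-t)a+tb$ recovers exactly the right-hand side of (2). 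Thus (2) is equivalent to the pointwise bound $f\leq h$ on every such interval $[a,b]$.

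For the implication (1)$\Rightarrow$(2), I set $g\coloneqq f-h$, observe $g(a)=g(b)=0$ and $\ddot g+kg=\ddot f+kf\geq 0$, and apply a Sturm-type comparison to deduce $g\leq 0$. The spectral assumption $k(b-a)^2<\pi^2$ places $k$ strictly below the first Dirichlet eigenvalue of $-\partial_s^2$ on $[a,b]$, which is precisely the condition for the Green's function of $-\partial_s^2-k$ with Dirichlet data to exist and be non-negative; explicitly, it is given by $\sin_k(s-a)\sin_k(b-s')/\sin_k(b-a)$ for $s\leq s'$ (and symmetric otherwise), which is non-negative exactly because $\sin_k$ is non-negative on $[0,b-a]$. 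The representation $g(s)=-\int G(s,s')(\ddot g+kg)(s')\,ds'$ then forces $g\leq 0$.

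For the converse (2)$\Rightarrow$(1), I would use a second-order Taylor expansion at an arbitrary $t_0\in\R$: take $a=t_0-\epsilon$, $b=t_0+\epsilon$, and $t=1/2$, so that the interpolated point is $t_0$. From $\sin_k(s)=s-ks^3/6+O(s^5)$ one computes
\[
\sigma_k^{(1/2)}(2\epsilon)=\frac{\sin_k(\epsilon)}{\sin_k(2\epsilon)}=\tfrac12+\tfrac{k}{4}\epsilon^2+O(\epsilon^4),
\]
and expanding $f(t_0\pm\epsilon)=f(t_0)\pm\dot f(t_0)\epsilon+\tfrac12\ddot f(t_0)\epsilon^2+O(\epsilon^3)$ and substituting into (2), the $O(1)$ and $O(\epsilon)$ contributions cancel, leaving at order $\epsilon^2$ the inequality $0\leq \tfrac{\epsilon^2}{2}(\ddot f(t_0)+kf(t_0))+O(\epsilon^4)$. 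Dividing by $\epsilon^2$ and sending $\epsilon\to 0$ yields (1).

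The only nontrivial step is the comparison argument in (1)$\Rightarrow$(2); everything else reduces to either the elementary identification of $h$ or a routine Taylor expansion. The subtlety is isolating the correct threshold $k(b-a)^2<\pi^2$ on which the Green's function is positive — the same threshold at which $\sigma_k^{(t)}$ is finite — so that the two characterizations match precisely, with the trivial regime $k(b-a)^2\geq \pi^2$ handled by the $+\infty$ convention.
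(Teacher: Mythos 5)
Your argument is correct and follows essentially the same route as the proof of the result the paper cites rather than reproves (\cite[Lemma 3.2]{MPR1}, cf.\ \cite[Thm.\ 14.28]{Villani09}): identify the right-hand side of (2) as the solution $h$ of the Dirichlet problem $\ddot h+kh=0$ on $[a,b]$, deduce $f\leq h$ from non-negativity of the Green's function of $-\partial_s^2-k$ below the first Dirichlet eigenvalue (equivalently $k(b-a)^2<\pi^2$, the same threshold at which $\sigma_k^{(t)}$ is finite), and recover the differential inequality from (2) by a symmetric second-order expansion with $a=t_0-\epsilon$, $b=t_0+\epsilon$, $t=1/2$. The only cosmetic fix is that, since $f$ is only assumed $C^2$, the Taylor remainders in the converse direction should be written in Peano form $o(\epsilon^2)$ rather than $O(\epsilon^3)$ and $O(\epsilon^4)$; this changes nothing when you divide by $\epsilon^2$ and let $\epsilon\to 0$.
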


Our focus is on inequalities of the form $N\ddot f+Kf\geq 0$, where $K\in \R$ and $N>1$. Thus we will consider reduced distortion coefficients of the form $\sigma_{K/N}$, from which we may define the {\it distortion coefficient} $\tau_{K,N}^{(t)}(\theta)$ as
\begin{align}
\tau_{K,N}^{(t)}(\theta)\coloneqq t^{1/N}\left(\sigma_{\frac{K}{N-1}}^{(t)}(\theta)\right)^{1-1/N},
\end{align}
with the convention $\infty^m=\infty=m\cdot \infty$, for any $m>0$.
It is notable that the map $\sigma(k)\coloneqq \sigma_k^{(t)}(1)$ is log-convex on the domain $k<\pi^2$ for every fixed $t$ \cite[Lemma 1.2]{Sturm2}. As a result, we have that 
\begin{align}\label{distortionineq}
\sigma_{K/N}^{(t)}(\theta)\leq \Big(\sigma_{\frac{K-\kappa}{N-\nu}}^{(t)}(\theta) \Big)^{1-\frac \nu N}\Big(\sigma_{\frac\kappa\nu}^{(t)}(\theta)\Big)^{\frac \nu N}\qquad \forall \ K,\kappa\in \R, \ \forall \ N>\nu>0.
\end{align}
Choosing $\kappa=0$ and $\nu=1$, we establish the simple but useful inequality
\begin{align}\label{tauineq}
\sigma_{K/N}^{(t)}(\theta)\leq \tau_{K,N}^{(t)}(\theta) \quad \forall \ K\in \R, N>1. 
\end{align}

The distortion coefficient comes about from a more refined analysis of the function $f$ in Theorem \ref{theorem1}. If $f:\R\to (0,\infty)$ can be decomposed as $f= f_\|f_\perp$, where $( f_\|^{1/n_1})''+\frac{k_1}{n_1} f_\|^{1/n_1}\leq 0$ and $(f_\perp^{1/n_2})''+\frac{k_2}{n_2}f_\perp^{1/n_2}\leq 0$, then one can check, by applying \cite[Thm.\ 14.28]{Villani09} to $f_\|$ and $f_\perp$, and Holder's inequality, that
\begin{align} \label{mixeddistortionineq}
f^{1/N}((1-t)a+tb)\geq \Big(\sigma_{\frac{k_1}{n_1}}^{(1-t)}(\theta) \Big)^{n_1/N}\Big(\sigma_{\frac {k_2}{n_2}}^{(1-t)}(\theta)\Big)^{n_2/N} f^{1/N}(a)+\Big(\sigma_{\frac{k_1}{n_1}}^{(t)}(\theta) \Big)^{n_1/N}\Big(\sigma_{\frac{k_2}{n_2}}^{(t)}(\theta)\Big)^{n_2/N}f^{1/N}(b), 
\end{align}
where $\theta =|b-a|$, $N= n_1+n_2$. In light of \eqref{distortionineq}, the inequality \eqref{mixeddistortionineq} is stronger than what \cite[Thm.\ 14.28]{Villani09} could have deduced from $(f^{1/N})''+\frac{K}{N} f^{1/N}\leq 0$, where $K=k_1+k_2$. When $N>1$, $k_1=0$, and $n_1=1$, we obtain 
\[f^{1/N}((1-t)a+tb)\geq \tau_{K,N}^{(1-t)}(b-a)f^{1/N}(a)+\tau_{K,N}^{(t)}(b-a)f^{1/N}(b).\]

We denote the set of timelike geodesics (affinely parametrized along $[0,1]$) by $\operatorname{TGeo}(M)$. 
Given two sets $A,B\subset M$, the timelike geodesic interpolation $G_t(A,B)$ between $A$ and $B$ is defined by 
\begin{align}\label{geodesicinterpolant}
G_t(A,B)\coloneqq\{\gamma_t: \gamma\in \operatorname{TGeo}(M), \gamma_0\in A, \gamma_1\in B\}.
\end{align}

When $A$ and $B$ are compact, $G_t(A,B)$ is also compact \cite[Lemma 2.5]{McCann2020}, hence $\m$-measurable. In light of Theorem \ref{theorem1}, the timelike Brunn-Minkowski inequality $\TBM(K,N)$ is roughly analogous to the statement that the function $f(t)
\coloneqq \m (G_t(A,B))$ 
decomposes into $f= f_\|f_\perp$ in the manner discussed above.

\begin{defn}[Timelike Brunn-Minkowski Inequality]\label{TBM}
Let $(M,g,\m)$ be a globally hyperbolic spacetime equipped with a measure $\m\ll \vol_g$. Let $N>1$ and $K\in \R$. For any $A,B\subset M$ non-empty and compact, define
\[\Theta=\Theta_K(A,B)\coloneqq \begin{cases}
\displaystyle\inf_{A\times B}\{\ell^+(x,y)\} & K\geq 0;\\[2mm]
\displaystyle\sup_{A\times B}\{\ell^+(x,y)\} & K< 0.
\end{cases}\] 
We say $M\in \TBM(K,N)$ if for any compact connected timelike q-dualizable sets $A,B\subset M$ with non-empty interior, we have
\begin{align}
\m^{1/N}(G_t(A,B))\geq \tau_{K,N}^{(1-t)}(\Theta)\m^{1/N}(A)+\tau_{K,N}^{(t)}(\Theta)\m^{1/N}(B).
\end{align}
\end{defn}

\newpage
\section{Timelike Brunn-Minkowski implies timelike Ricci lower bound}\label{Resultsection}

Let $(M,g,\m)$ be a $C^2$-smooth globally hyperbolic spacetime of dimension $n$, with $d\m=e^{-\psi}d\vol_g$ for some $\psi\in C^2(M)$. Let $N>1$. Suppose by way of contradiction that there exists $\epsilon>0$, $x_0\in M$ and timelike unit vector $v_0\in T_{x_0}M$ such that $\Ric_{x_0}^{N,\m}(v_0,v_0)\leq K-2\epsilon$. We want to then show that $\TBM(K,N)$ fails. That is, we want to find a pair of timelike $q$-dualizable sets $A$ and $B$ such that
\[\m^{1/N}(G_t(A,B))< \tau_{K,N}^{(1-t)}(\Theta)\m^{1/N}(A)+\tau_{K,N}^{(t)}(\Theta)\m^{1/N}(B),\]
for some $0<t<1$. We will choose as $A$ a small `cube' of side length $\delta$ about $x_0$, and as $B$ the image of $A$ under a map $T_\lambda(x):=\exp_{x}(\lambda V(x))$, where $V$ is some $C^1$ vector field, which in particular maps $x_0$ to $v_0$. The fact that $A$ and $B$ will be timelike q-dualizable is clear from the continuity of the time separation function $\ell^+$ and of $T_\lambda(x)$. Indeed, continuity of $\ell^+(x,T_\lambda(y))$ and the fact that $\ell^+(x_0,T_\lambda(x_0))=\lambda|v_0|_g>0$ means that there is an open set $U\ni(x_0,x_0)$ such that $\ell^+(x,T_\lambda(y))>0$ on $U$. By continuity of $T_\lambda$, we can then find $\delta$ such that $A_\delta\times T_\lambda(A_\delta)=A\times B\subset U$. Since $\ell^+(x,y)>0$ for all $(x,y)\in A\times B$, $A$ and $B$ are timelike $q$-dualizable \cite[Lemma 4.4]{McCann2020}. Moreover both $A$ and $B$ can be taken to be in the interior domain of $\Log$ (about $(x_0, 0)$), and therefore any pair $(x,y)\in A\times B$ is in the interior of a affinely parametrized maximizing timelike geodesic. Hence $A\times B\subset \operatorname{sing}(\ell)^\mathsf{c}$, so $\ell$ is smooth on $A\times B$.  All that is left is to prove the following:

\begin{theorem}\label{maintheorem}
Assume there exists $\epsilon>0$ and timelike unit vector $(x_0,v_0)\in TM$ such that $\Ric_{x_0}(v_0,v_0)\leq K-2\epsilon$. Let $\delta>0$, and let $Q_\delta$ be the cube of sidelength $\delta$ spanned by the eigenvectors of the linear map $R_{v_0}\coloneqq \Riem(v_0, \cdot)v_0:T_{x_0}M\to T_{x_0}M$. Let $A_\delta=\exp_{x_0}(Q_\delta)$. There exists $t>0$, and a map $T_\lambda(x)$ such that, for all sufficiently small $\lambda$ and $\delta$, 
\begin{align}\label{maininequality}
\m^{1/N}(G_t(A_\delta,T_\lambda(A_\delta)))< \tau_{K,N}^{(1-t)}(\Theta)\m^{1/N}(A_\delta)+\tau_{K,N}^{(t)}(\Theta)\m^{1/N}(T_\lambda(A_\delta)).
\end{align}
\end{theorem}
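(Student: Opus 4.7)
My plan is to adapt the Riemannian volume-expansion argument of MPR1 to the Lorentzian and weighted setting. The vector field $V$ must be chosen not just with $V(x_0)=v_0$ but also with specifically prescribed first derivative $DV(x_0) = -\frac{D\psi(v_0)}{N-n}\Id$ in normal coordinates at $x_0$. This Bakry-\'Emery tuning is what forces $\Ric^{N,\m}_{x_0}(v_0,v_0)$ — rather than a weaker combination of $\Ric$, $D^2\psi$, and $D\psi \otimes D\psi$ — to appear in the leading-order asymptotics. Set $T_\lambda(x) := \exp_x(\lambda V(x))$, $B_\lambda := T_\lambda(A_\delta)$, and the ``midpoint'' map $\Phi_t(x) := \exp_x(t\lambda V(x))$. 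For $\delta$ and $\lambda$ sufficiently small, $A_\delta$ and $B_\lambda$ lie inside a convex normal neighbourhood of $x_0$, and their timelike $q$-dualizability is automatic, as the excerpt already notes.

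The first technical step is to identify $G_t(A_\delta,B_\lambda)$ with $\Phi_t(A_\delta)$ up to boundary terms of order $o(\lambda^2\delta^n)$ in $\m$. This is the Lorentzian analogue of the Euclidean identity $(1-t)A+tB = A+t\lambda v_0$, which holds exactly for convex $A$ when $B = A+\lambda v_0$, and it follows from a second-order Taylor expansion of $F_t$ in both its arguments around $(x_0,T_\lambda(x_0))$. Next I would compute $\m^{1/N}(A_\delta)$, $\m^{1/N}(B_\lambda)$, and $\m^{1/N}(\Phi_t(A_\delta))$ via the change-of-variables formula, with $\det D\Phi_t(x_0)$ controlled by the boundary-value Jacobi equation of section \ref{geometrysection}: $D\Phi_t(x_0)\xi$ is the time-$t$ value of the Jacobi field along $s\mapsto \exp_{x_0}(st\lambda v_0)$ with data $J(0)=\xi$, $J'(0) = t\lambda DV(x_0)\xi$. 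Expanding $J$ to second order in $\lambda$, combining with the weight expansion $e^{-\psi(\Phi_t(x_0))+\psi(x_0)}$, and taking $1/N$-th powers yields the LHS of \eqref{maininequality} to order $\lambda^2$; the trace identity $\Tr[\xi\mapsto\Riem(\xi,v_0)v_0] = -\Ric_{x_0}(v_0,v_0)$ is what brings the Ricci tensor into the computation.

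Finally, the distortion-coefficient expansion $\tau_{K,N}^{(t)}(\Theta) = t + \tfrac{t(1-t^2)K\Theta^2}{6N} + O(\Theta^4)$ with $\Theta = \lambda|v_0|_g + O(\delta)$ gives the RHS of \eqref{maininequality} a matching expansion. The $O(\lambda^0)$ and $O(\lambda^1)$ terms of RHS$-$LHS cancel identically, and the $O(\lambda^2)$ coefficient reduces to $\tfrac{t(1-t)\lambda^2}{2N}\m^{1/N}(A_\delta)$ times
\[
K - \Ric_{x_0}(v_0,v_0) - D^2\psi(v_0,v_0) - \Tr\bigl(DV(x_0)^2\bigr) + \tfrac{1}{N}\bigl(D\psi(v_0) - \Tr DV(x_0)\bigr)^2.
\]
For the isotropic choice $DV(x_0) = -\tfrac{D\psi(v_0)}{N-n}\Id$, a short algebraic computation shows that $-\Tr(DV(x_0)^2) + \tfrac{1}{N}(D\psi(v_0)-\Tr DV(x_0))^2$ equals $\tfrac{D\psi(v_0)^2}{N-n}$, so the bracket simplifies exactly to $K - \Ric^{N,\m}_{x_0}(v_0,v_0) \geq 2\epsilon > 0$. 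This produces a strict gap of order $\epsilon\lambda^2$ between RHS and LHS, so \eqref{maininequality} fails for all sufficiently small $\lambda$ (with $\delta \ll \lambda$) and any fixed $t\in(0,1)$. The main obstacle I expect is the identification $G_t(A_\delta,B_\lambda) = \Phi_t(A_\delta)$ to the required precision — the set $G_t$ by definition uses all pairs in $A_\delta\times B_\lambda$, not only the diagonal pairs $(x,T_\lambda(x))$ — together with Lorentzian sign bookkeeping; the $R_{v_0}$-eigenvector orientation of $Q_\delta$ is used to decouple the Jacobi field contributions direction-by-direction, and the remaining work is second-order Taylor expansion of the sort already present in the Lorentzian optimal transport theory of \cite{McCann2020}.
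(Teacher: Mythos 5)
Your overall strategy is the same as the paper's: build $V$ with $V(x_0)=v_0$ and the Bakry-\'Emery-tuned derivative $DV(x_0)=-\tfrac{D\psi(v_0)}{N-n}\Id$, transport along $T_\lambda(x)=\exp_x(\lambda V(x))$, compare the geodesic interpolation $G_t$ with the image of the optimal map $T_{\lambda t}$, and extract a strict $\epsilon\lambda^2$ deficit. Your middle step differs in packaging only: you Taylor-expand both sides directly to second order in $\lambda$ and compare coefficients, whereas the paper first proves the differential inequality $\ddot\Dcal_{x_0}+\tfrac{(K-\epsilon)\lambda^2}{N}\Dcal_{x_0}\geq 0$ (Lemma \ref{DeqLemma}) and then converts it via Theorem \ref{theorem1} and the expansion of $\sigma$, together with $\sigma_{K/N}\leq\tau_{K,N}$ (Lemma \ref{InteqLemma}). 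Your second-order algebra is correct: the coefficient $\tfrac{t(1-t)}{2N}\bigl[K-\Ric-D^2\psi(v_0,v_0)-\Tr(DV^2)+\tfrac1N(D\psi(v_0)-\Tr DV)^2\bigr]$ does reduce, for the isotropic choice of $DV(x_0)$, to $\tfrac{t(1-t)}{2N}\bigl[K-\Ric^{N,\m}_{x_0}(v_0,v_0)\bigr]\geq\tfrac{t(1-t)\epsilon}{N}$; this is exactly the same cancellation the paper encodes as $\Ecal_{x_0}(0)=0$.

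Two points need tightening, and one of them is a genuine quantitative slip. First, the step you correctly flag as the main obstacle --- controlling $\m(G_t(A_\delta,T_\lambda(A_\delta)))$ by $\m(T_{\lambda t}(A_\delta))$ --- is only sketched; this is where the paper spends Propositions \ref{DF_s}, \ref{Tballestimate} and Lemma \ref{optimaltogeodesic}. Your eigenvector-orientation idea is the right one (it yields $M_1A_\delta+M_2A_\delta=(M_1+M_2)A_\delta$ after simultaneous diagonalization, so every off-diagonal pair $(x,y)$ is matched by a diagonal point $z$), but you also need a mechanism to convert the resulting Hausdorff-type proximity $F_t(A_\delta\times T_\lambda(A_\delta))\subset B^{\bar g}_{\delta\rho}(T_{\lambda t}(A_\delta))$ into a measure bound; the paper does this with a Steiner/mixed-volume estimate, and some such argument is unavoidable. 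Second, the precision you claim, an error $o(\lambda^2\delta^n)$, is not attainable in the regime $\delta\ll\lambda$ that you state at the end. The quadratic Taylor remainder in the spatial variables displaces points by $O(\delta^2)$ \emph{independently of} $\lambda$, so the best available relative error is $O(\delta+\lambda^3)$ (this is exactly the paper's $C(\delta+\lambda^3)\delta^{n/N}$ term); with, say, $\delta=\lambda^{3/2}$ this swamps the $\epsilon\lambda^2$ gain and the final strict inequality fails to follow. The fix is to couple the scales, e.g.\ $\delta\sim\lambda^3$ as in the paper (any $\delta=o(\lambda^2)$ suffices); with that correction your argument goes through and coincides in substance with the paper's proof.
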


Since $R_{v_0}$ is symmetric with respect to the Lorentzian metric and admits the timelike eigenvector $v_0$, it admits $n$ independent eigenvectors, so the set $A_\delta$ is well defined.

The idea of the proof is to compare geodesic interpolation to optimal interpolation. The geodesic interpolation map $F_t(x,y)$ is given by 
\[F_t(x,y)=\exp(t\Log(x,y)).\]
$F_t$ maps a pair of points $(x,y)$ to the geodesic $t$-midpoint between them. In particular, for any $A\times B\subset M\times M$ contained in the domain of $\Log$, $F_t(A\times B)$ is the set of all geodesic $t$-midpoints between any pair of points $(x,y)\in A\times B$. This is related to $G_t(A,B)$, except that the former does not distinguish between timelike and non-timelike geodesics. It therefore follows that $G_t(A,B)\subset F_t(A\times B)$ (in fact, for $A_\delta$ and $B_\delta=T_\lambda(A_\delta)$ as defined above, we have equality of these two sets). 

Geodesic interpolation between $A_\delta$ and $T_\lambda(A_\delta)$ is given by $F_t(A_\delta\times T_\lambda(A_\delta))$. On the other hand, optimal interpolation between these two sets is given by the image of $A_\delta$ under the map $F_t(x, T_\lambda (x))=T_{\lambda t}(x)$. The interpolation is {\it optimal} in the sense that it,  if $\mu_0=\frac{\m\vert_A}{\m(A)}$ and $\mu_1=\frac{\m\vert_B}{\m(B)}$, then $(\Id\times T_\lambda)_\#\mu_0$ attains the supremum \eqref{ell_qdefn} in the $q$-Lorentz-Wasserstein time separation between $\mu_0$ and $\mu_1$. In particular, it is optimal in the sense that $T_{\lambda t}$ transports $A_\delta$ to $T_{\lambda}(A_\delta)$ by sending each point in $A_\delta$ along a single geodesic to to a point in $T_\lambda(A_\delta)$; on the other hand, the geodesic interpolation sends each point in $A_\delta$ along very many geodesics, one for every point in $T_\lambda(A_\delta)$.

Our objective is to obtain an inequality like \eqref{maininequality} for the geodesic interpolation; to do so, we obtain a similar inequality for the optimal interpolation by leveraging Theorem \ref{theorem1}, and then analyze the deviation of one from the other to obtain the correct inequality for the geodesic interpolation.

\subsection{Volume estimate along optimal transport}
The main content of this section is Lemma \ref{InteqLemma}, an inequality similar to \eqref{maininequality} with respect to $T_{\lambda t}(A_\delta)$. For any $x\in M$, and any basis $\{e_i\}$ of $T_xM$, let $A_\delta$ be the image of the cube of side length $\delta$ spanned by this basis under the exponential map $\exp_x$. The infinitesimal volume distortion of $A_\delta$ by the optimal interpolation $T_{\lambda t}$ is given by 
\[\Jcal_x(t)\coloneqq \lim_{\delta\to 0}\frac{\m (T_{\lambda t}(A_\delta))}{\m(A_\delta)}.\]

The quantity of interest is the $N$-th root of this distortion,
\[\Dcal_x(t)\coloneqq \Jcal_x(t)^{1/N}.\]

We remark that the specfic form of $A_\delta$ is not relevant to the results of this section. Any other family of sets $\{A_\delta\}$ will give the same $\Dcal_x(t)$ so long as they shrink to a point $x$, meaning there exists $r>0$ such that $A_\delta\subset B_{\delta r}(x)$ for all $\delta$, and have {\it bounded eccentricity}, meaning there exists some $c>0$ such that $\vol_g(A_\delta)> c \vol_g(B_\delta(x_0))$ for all $\delta$. We will refer to such families as {\it nice}. 

First we show that infinitesimal volume distortion satisfies a differential convexity condition consistent with Theorem \ref{theorem1}.
\begin{lemma}\label{DeqLemma}
Suppose there exists $\epsilon>0$ and timelike unit vector $(x_0,v_0)\in TM$ such that $\Ric_{x_0}(v_0,v_0)\leq K-2\epsilon$. There exists a $C^1$ vector field $V$ defined in a neighbourhood of $x_0$, such that the associated transport map $T_{\lambda t}(x)\coloneqq \exp(\lambda tV(x))$ induces a volume distortion $\Dcal_{x_0}(t)$ which, for sufficiently small $\lambda$, satisfies
\begin{align}\label{Deq2}
\ddot \Dcal_{x_0}(t) +\frac{(K-\epsilon)}{N}\lambda^2\Dcal_{x_0}(t)\geq 0 \quad \forall t\in[0,1].
\end{align}
\end{lemma}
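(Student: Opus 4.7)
My plan is to express the volume distortion through a matrix Jacobi field along $\gamma(t) \coloneqq \exp_{x_0}(\lambda t v_0)$, and to engineer the initial derivative of $V$ at $x_0$ so that the resulting Raychaudhuri-type identity collapses exactly into the Bakry-\'Emery-Ricci tensor. This will give the desired inequality at $t=0$ with an $\epsilon$-margin, which can then be propagated to all of $[0,1]$ by a small-$\lambda$ continuity argument.

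Concretely, for any $C^1$ vector field $V$ with $V(x_0) = v_0$, the differential $DT_{\lambda t}|_{x_0}(e_i)$ is a Jacobi field $J_i(t)$ along $\gamma$ with $J_i(0) = e_i$ and $\dot J_i(0) = \lambda (\nabla_{e_i} V)|_{x_0}$. Collecting the $J_i(t)$ as columns of a matrix $A(t)$ in a parallel-transported orthonormal frame along $\gamma$, one has
\[ \Jcal_{x_0}(t) = e^{-(\psi \circ \gamma)(t) + \psi(x_0)} \det A(t) \]
for $t$ small. Taking logarithms, differentiating twice, and using the matrix Jacobi equation (which gives $\Tr(\ddot A A^{-1}) = -\Ric(\dot\gamma,\dot\gamma)$) produces the identity
\[ N\frac{\ddot \Dcal}{\Dcal}(t) = -\Ric(\dot\gamma,\dot\gamma) - \Tr(B^2) - (\psi\circ\gamma)'' + \frac{1}{N}\bigl[\Tr(B) - (\psi\circ\gamma)'\bigr]^2, \]
where $B \coloneqq \dot A A^{-1}$.

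I would then choose $V$ so that $V(x_0) = v_0$ and $\nabla V|_{x_0} = -\frac{D\psi(v_0)}{N-n}\Id$, e.g., by setting $V^i(x) \coloneqq v_0^i - \frac{D\psi(v_0)}{N-n}x^i$ in normal coordinates centered at $x_0$ (when $N=n$, take $\nabla V|_{x_0} = 0$). This value of $\nabla V|_{x_0}$ is precisely the one that maximizes the right-hand side of the identity above over scalar matrices at $t=0$. Substituting into the identity, the $-\Tr(B(0)^2)$ term contributes $-\frac{n\lambda^2(D\psi(v_0))^2}{(N-n)^2}$ while the squared bracket contributes $+\frac{N\lambda^2(D\psi(v_0))^2}{(N-n)^2}$; these combine to $+\frac{\lambda^2(D\psi(v_0))^2}{N-n}$, which is exactly the Bakry-\'Emery correction. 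Hence
\[ N \ddot \Dcal_{x_0}(0) = -\lambda^2 \Bigl[\Ric_{x_0}(v_0,v_0) + D^2\psi(v_0,v_0) - \tfrac{(D\psi(v_0))^2}{N-n}\Bigr] = -\lambda^2 \Ric^{N,\m}_{x_0}(v_0,v_0) \geq -\lambda^2(K - 2\epsilon), \]
so $\ddot \Dcal_{x_0}(0) + \frac{K-\epsilon}{N}\lambda^2 \Dcal_{x_0}(0) \geq \frac{\epsilon \lambda^2}{N} > 0$ with the claimed strict margin.

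To extend this to all $t \in [0,1]$, I would use uniform continuity in $\lambda$. As $\lambda \downarrow 0$, the arc $\gamma([0,1])$ shrinks to $\{x_0\}$, so $\Ric(\dot\gamma,\dot\gamma)/\lambda^2$ converges uniformly to $\Ric_{x_0}(v_0,v_0)$, and similarly $(\psi\circ\gamma)'(t)/\lambda$ and $(\psi\circ\gamma)''(t)/\lambda^2$ converge uniformly to $D\psi(v_0)$ and $D^2\psi(v_0,v_0)$. The matrix Riccati equation $\dot B = -\mathcal R - B^2$, with tidal term $\mathcal R = O(\lambda^2)$ and initial data $B(0) = O(\lambda)$, forces $B(t)/\lambda$ to stay uniformly close to $\nabla V|_{x_0}$ on $[0,1]$ by Gr\"onwall. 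Consequently $N\ddot\Dcal(t)/(\lambda^2 \Dcal(t)) \to -\Ric^{N,\m}_{x_0}(v_0,v_0)$ uniformly in $t \in [0,1]$, and the $\epsilon$-margin secured at $t=0$ survives for sufficiently small $\lambda$. The main obstacle is precisely this uniform-in-$t$ Gr\"onwall control on $B$ together with verifying that all error terms are genuinely $o(\lambda^2)$, so that the margin is not consumed by the propagation from $t=0$ across the whole interval.
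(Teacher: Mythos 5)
Your proposal follows essentially the same route as the paper: the same transport map with $V(x_0)=v_0$ and $DV|_{x_0}=-\frac{D\psi(v_0)}{N-n}\Id$, the same Jacobi-matrix/Riccati representation of the Jacobian, the same algebraic cancellation producing $\Ric^{N,\m}$ exactly at $t=0$, and the same small-$\lambda$ uniformity argument to carry the $\epsilon$-margin over all $t\in[0,1]$ (the paper packages the deviation from $t=0$ into an error term $\Ecal_{x_0}(\lambda t)$ that vanishes at $t=0$ and is bounded below by $-\epsilon$ via equicontinuity, while you justify the same fact with uniform convergence plus a Gr\"onwall bound on the Riccati solution). The argument is correct as proposed.
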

\begin{proof}
Recall that $d\m=e^{-\psi}d\vol_g$. There exists $\phi\in C^2(U)$ in a neighbourhood $U$ of $x_0$ satisfying 
\begin{align}\label{potentialconditions}
\begin{cases}
\nabla\phi\vert_{x_0}=v_0;\\
D\nabla\phi\vert_{x_0}=-\frac{1}{N-n}(D\psi)_{x_0}(v_0)\Id.
\end{cases}
\end{align}
Let $V\coloneqq  \nabla \phi$, $T_\lambda(x)\coloneqq \exp(\lambda V(x))$. 

It follows from Lebesgue differentiation and change of variables that 
\[\Jcal_x(t)= \frac{e^{-\psi(T_{\lambda t}(x))}}{e^{-\psi(x)}}\lim_{\delta\to 0}\frac{\vol_g(T_{\lambda t}(A_\delta))}{\vol_g(A_\delta)}=\frac{e^{-\psi(T_{\lambda t}(x))}}{e^{-\psi(x)}} \det (DT_{\lambda t})_{x}.\]

Let $M_x(t):= (DT_{t})_{x}$. By definition $M_x(t)$ is a matrix of Jacobi fields for each $x\in M$, and therefore solves the system 
\begin{align}\label{JacobiMatrix}
\begin{cases}
\ddot M+R_VM=0; \\
M(0)=\Id;\\
\dot M(0)= DV,
\end{cases}\end{align}
where $R_V\coloneqq\Riem(V, \cdot )V$. Notice that $\Tr R_V=\Ric(V,V)$. Since $M_x(0)=\Id$ is invertible, there exists small $t$ such that $M_x(t)$ is invertible. In this small $t$ regime, we may define $L_x(t)\coloneqq \dot M_x(t)M_x^{-1}(t)$.  The quantity $L_x(t)$ solves the Riccati system 
\begin{align}\label{JacobiLogDeriv}
\begin{cases}
\dot L+ R_V+L^2=0;\\
L(0)= DV.
\end{cases}
\end{align}
Using \eqref{JacobiLogDeriv} and  the Jacobi equation $\diff{}{t}\det M(t)=\det M\Tr(\dot M M^{-1})= \det M(t)\Tr(L(t))$,  one can take derivatives of $\Dcal_{x}(t) = \frac{e^{-\psi(T_{\lambda t}(x))/N}}{e^{-\psi(x)/N}} M^{1/N}_x(\lambda t)$ and find that, for sufficiently small $\lambda$:
\begin{align*}
\begin{split}
\ddot \Dcal_x(t)=& \frac 1N\left[\frac1N\Dcal_x(t)\Big(\Tr(\lambda L_x(\lambda t))-\lambda D\psi(V)\Big)^2- \Dcal_x(t)\Big(\Tr(\lambda^2 L_x^2(\lambda t))+\lambda^2\Ric(V,V)+\lambda^2D^2\psi(V,V)\Big)\right].
\end{split}
\end{align*}
Dividing by $\Dcal_{x}(t)$ and rearranging, we find 
\begin{equation}\label{Deq1}
\frac{\ddot \Dcal_x(t)}{\Dcal_x(t)}=-\frac{\lambda^2}{N}\Ric^{N,\m}(V(x),V(x))+\frac{\lambda^2}{N}\Ecal_x(\lambda t),
\end{equation}
where 
\[\Ecal_x(t)\coloneqq\frac1n \Tr^2\big(L_x(t)\big)-\Tr\big(L_x^2(t)\big)-\frac{n}{N(N-n)}\left(D\psi(V)+\frac{N-n}{n}\Tr\big(L_x(t)\big)\right)^2.\]

Since $V=\nabla\phi$, and  $\phi$ satisfies \eqref{potentialconditions},  $V$ satisfies 
\begin{align}
\begin{cases}\label{Vconditions}
V(x_0)=v_0;\\
DV(x_0)=-\frac{1}{N-n}D\psi(v_0)\Id\eqqcolon \alpha \Id.
\end{cases}
\end{align} 
Therefore at $x=x_0$, \eqref{JacobiLogDeriv} reads
\begin{align}\label{JacobiLogDeriv2}
\begin{cases}
\dot L_{x_0}+R_{v_0}+L_{x_0}^2=0;\\
L_{x_0}(0)=\alpha \Id.
\end{cases}
\end{align}

This shows that $\Ecal_{x_0}(0)=0$. Since $t\in[0,1]$, $\Ecal_{x_0}(\lambda t)$ is continuous in $\lambda$ and uniformly equicontinuous in $t$. Hence for $\lambda$ sufficiently small 
\begin{equation}\label{errorestimate}
\Ecal_{x_0}(\lambda t)\geq -\epsilon
\end{equation}
uniformly in $t$. 
With \eqref{errorestimate} and the lower bound on the Ricci curvature, \eqref{Deq1} becomes 
\begin{align}
\frac{\ddot \Dcal_{x_0}(t)}{\Dcal_{x_0}(t)}=-\frac{\lambda^2}{N}\Ric^{N,\m}(V(x_0),V(x_0))+\frac{\lambda^2}{N}\Ecal_{x_0}(\lambda t)\geq \frac{\lambda^2}{N}\left(-(K-2\epsilon)-\epsilon\right)=-\frac{(K-\epsilon)\lambda^2}{N},
\end{align}
which gives the differential inequality
\begin{align}
\ddot \Dcal_{x_0}(t)+\frac{(K-\epsilon)\lambda^2}{N}\Dcal_{x_0}(t)>0,
\end{align}
as desired.
\end{proof}

Having obtained a convexity condition for the infinitesimal distortion, we now `back off' the limit to obtain a weaker convexity condition along optimal interpolation. 
\begin{lemma}\label{InteqLemma}
Assume the premise of Lemma \ref{DeqLemma}. Let $\{A_\delta\}_{\delta>0}$ be a nice family of sets that shrink to $x_0$ as $\delta\to 0$. There exists constants $C_1,C_2>0$, $\delta_0$ and $\lambda_0$ such that for $\delta<\delta_0$, $\lambda <\lambda_0$, 
\begin{align}
\m^{1/N}(T_{\lambda t}(A_\delta))&\leq \tau_{K,N}^{(1-t)}(\Theta)\m^{1/N}(A_\delta)+\tau_{K,N}^{(t)}(\Theta)\m^{1/N}(T_\lambda(A_\delta))
+\left(C_1(\delta+\lambda^4)-C_2\lambda^2\right)\delta^{n/N}.
\end{align}
\end{lemma}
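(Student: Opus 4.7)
The plan is to integrate the pointwise convexity inequality from Lemma \ref{DeqLemma} via Theorem \ref{theorem1}, pass from the infinitesimal volume ratio $\Dcal_{x_0}$ to finite volumes at scale $\delta$, and then upgrade the resulting $\sigma$-coefficients to $\tau$-coefficients evaluated at $\Theta$, extracting the $\epsilon$-improvement in the Ricci bound as the negative term $-C_2\lambda^2$. First, using the rescaling identity $\sigma_{k\theta^2}^{(t)}(1)=\sigma_k^{(t)}(\theta)$ (which follows immediately from the ODE defining $\sin_k$), Theorem \ref{theorem1} applied to $\Dcal_{x_0}$ on $[0,1]$ with $k=(K-\epsilon)\lambda^2/N$ and $\Dcal_{x_0}(0)=1$ yields
\begin{align*}
\Dcal_{x_0}(t)\leq \sigma_{(K-\epsilon)/N}^{(1-t)}(\lambda)+\sigma_{(K-\epsilon)/N}^{(t)}(\lambda)\,\Dcal_{x_0}(1).
\end{align*}
To convert this infinitesimal statement to a finite one, I Taylor expand the $C^1$ density $\rho(x,t):=e^{-\psi(T_{\lambda t}(x))}\det_g(DT_{\lambda t})_x$ in $x$ about $x_0$; together with bounded eccentricity of $\{A_\delta\}$, the change-of-variables formula yields
\begin{align*}
\frac{\m(T_{\lambda t}(A_\delta))}{\m(A_\delta)}=\Jcal_{x_0}(t)+O(\delta),
\end{align*}
uniformly in $t\in[0,1]$ and $\lambda\in(0,\lambda_0)$. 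Taking $N$-th roots (with $\m^{1/N}(A_\delta)\asymp\delta^{n/N}$ and $\Jcal_{x_0}$ bounded below for $\lambda$ small) gives $\m^{1/N}(T_{\lambda t}(A_\delta))=\Dcal_{x_0}(t)\m^{1/N}(A_\delta)+O(\delta\cdot\delta^{n/N})$. Substituting this into the previous display produces
\begin{align*}
\m^{1/N}(T_{\lambda t}(A_\delta))\leq \sigma_{(K-\epsilon)/N}^{(1-t)}(\lambda)\m^{1/N}(A_\delta)+\sigma_{(K-\epsilon)/N}^{(t)}(\lambda)\m^{1/N}(T_\lambda(A_\delta))+O(\delta\cdot\delta^{n/N}).
\end{align*}

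For the coefficient comparison, Taylor expanding $\sin_k(s)=s-ks^3/6+O(s^5)$ gives
\begin{align*}
\sigma_k^{(t)}(\theta)=t+\tfrac{kt(1-t^2)}{6}\theta^2+O(\theta^4),\qquad \tau_{K,N}^{(t)}(\theta)=t+\tfrac{Kt(1-t^2)}{6N}\theta^2+O(\theta^4),
\end{align*}
so fixing $t\in(0,1)$ in advance (say $t=1/2$) yields $\sigma_{(K-\epsilon)/N}^{(t)}(\lambda)=\tau_{K,N}^{(t)}(\lambda)-c_t\lambda^2+O(\lambda^4)$ with $c_t:=\tfrac{\epsilon t(1-t^2)}{6N}>0$. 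Smoothness of $\ell$ on $A_\delta\times T_\lambda(A_\delta)\subset\operatorname{sing}(\ell)^{\mathsf{c}}$, together with the bound $|\nabla \ell|=O(1)$ uniform in $\lambda$ (since $|\nabla_x \ell^2|=O(\ell)$ off the cut locus), gives $|\Theta-\lambda|\leq C\delta$, and $\partial_\theta\tau_{K,N}^{(t)}(\theta)=O(\theta)$ then upgrades this to $|\tau_{K,N}^{(t)}(\Theta)-\tau_{K,N}^{(t)}(\lambda)|=O(\delta\lambda)$. Combining these, $\sigma_{(K-\epsilon)/N}^{(t)}(\lambda)\leq \tau_{K,N}^{(t)}(\Theta)-c_t\lambda^2+C(\delta+\lambda^4)$ (and analogously for $t\mapsto 1-t$). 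Substituting into the bound of the previous paragraph and using $c_*\delta^{n/N}\leq\m^{1/N}(A_\delta),\m^{1/N}(T_\lambda(A_\delta))\leq C_*\delta^{n/N}$, all the negative gains pool into $-C_2\lambda^2\delta^{n/N}$ and all errors into $C_1(\delta+\lambda^4)\delta^{n/N}$, yielding the claim.

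The main obstacle is the coefficient comparison above: because $\sigma_{K/N}^{(t)}(\theta)$ and $\tau_{K,N}^{(t)}(\theta)$ agree at order $\theta^2$, a naive use of \eqref{tauineq} would only give the non-strict estimate $\sigma_{(K-\epsilon)/N}^{(t)}(\lambda)\leq \tau_{K,N}^{(t)}(\lambda)$; the crucial $-c_t\lambda^2$ gain becomes visible only after an explicit second-order Taylor expansion of both coefficients. Quantitatively, this forces the hierarchy $\delta\ll\lambda^2\ll\lambda$ among the small parameters when choosing $\delta_0,\lambda_0$, since the $O(\delta)$ errors from both the back-off step and the $|\Theta-\lambda|$ bound must be absorbed into the much smaller $-C_2\lambda^2$ gain.
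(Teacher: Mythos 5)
Your proposal is correct and follows essentially the same route as the paper: apply Theorem \ref{theorem1} to $\Dcal_{x_0}$ with the rescaled coefficient from Lemma \ref{DeqLemma}, Taylor-expand the distortion coefficients in $\lambda$ to turn the $\epsilon$-improved curvature bound into a $-c\,\lambda^2$ gain, back off the infinitesimal limit at cost $O(\delta\,\delta^{n/N})$, and replace $\lambda$ by $\Theta=\lambda+O(\delta)$ via smoothness of $\ell$. The only cosmetic difference is that you expand $\tau_{K,N}^{(t)}$ to second order and compare it with $\sigma_{(K-\epsilon)/N}^{(t)}$ directly, whereas the paper extracts the gain from $\sigma_{(K-\epsilon)/N}$ versus $\sigma_{K/N}$ and then invokes \eqref{tauineq}; also, the concluding hierarchy $\delta\ll\lambda^2$ is not needed for this lemma as stated (both the $C_1$ and $C_2$ terms are kept), only later in the proof of Theorem \ref{maintheorem} where one takes $\delta\sim\lambda^3$.
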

\begin{proof}
As a result of Lemma \ref{DeqLemma} and Theorem \ref{theorem1}, we have that 
\[\Dcal_{x_0}((1-t)a+tb)\leq \sigma_{\frac{K-\epsilon}{N}}^{(1-t)}(\lambda(b-a))\Dcal_{x_0}(a)+ \sigma_{\frac{K-\epsilon}{N}}^{(t)}(\lambda(b-a))\Dcal_{x_0}(b),\]
for any $0\leq a\leq b\leq 1$. In particular, with $a=0,b=1$, we have 
\begin{align}\label{inteq}
\Dcal_{x_0}(t)\leq \sigma_{\frac{K-\epsilon}{N}}^{(1-t)}(\lambda)\Dcal_{x_0}(0)+ \sigma_{\frac{K-\epsilon}{N}}^{(t)}(\lambda)\Dcal_{x_0}(1).
\end{align}

One can easily check from the definition $\sigma_k^{(t)}(\theta)\coloneqq \frac{\sin_k(t\theta)}{\sin_k(\theta)}$, valid for all $k\theta^2<\pi^2$, that, as $\theta\to 0$, 
\begin{align*}
\sigma_{k}^{(t)}(\theta)= t+t(1-t)(1+t)\frac{k}{6}\theta^2+O(\theta^4).
\end{align*}
 We thus deduce that as $\lambda \to 0$,
\begin{align}
\sigma_{\frac{K-\epsilon}{N}}^{(t)}(\lambda)= \sigma_{\frac{K}{N}}^{(t)}(\lambda)-t(1-t^2)\frac{\epsilon}{6N}\lambda^2+O(\lambda^4),
\end{align}

Therefore equation \eqref{inteq} implies that as $\lambda \to 0$, 
\begin{align}\label{inteq2}
\Dcal_{x_0}(t)\leq \sigma_{\frac{K}{N}}^{(1-t)}(\lambda)\Dcal_{x_0}(0)+\sigma_{\frac{K}{N}}^{(t)}(\lambda)\Dcal_{x_0}(1)- t(1-t)\frac{\epsilon}{6N}\Big((2-t)\Dcal_{x_0}(0)+(1+t)\Dcal_{x_0}(1)\Big)\lambda^2+O(\lambda^4).
\end{align} 

 Continuity and positivity of $\Dcal_{x_0}$ over the compact interval $[0,\lambda]$ allows us to find $M>0$ to bound the entire quantity: 
\[(2-t)\Dcal_{x_0}(0)+(1+t)\Dcal_{x_0}(1)>M>0,\]
so that \eqref{inteq2} can be more compactly written as
\begin{align}
\Dcal_{x_0}(t)\leq \sigma_{\frac{K}{N}}^{(1-t)}(\lambda)\Dcal_{x_0}(0)+\sigma_{\frac{K}{N}}^{(t)}(\lambda)\Dcal_{x_0}(t)- t(1-t)\frac{M\epsilon}{6N}\lambda^2+O(\lambda^4),
\end{align} 
as $\lambda \to 0$. Since the sets $A_\delta$ shrink to $x_0$, for sufficiently small $\delta$ and $\lambda$, the product $A_\delta\times T_\lambda(A_\delta)\subset \operatorname{sing(\ell)}^\mathsf{c}$, so $\ell(x,y)$ is differentiable on this set. This gives that, as $\delta\to 0$, 
\[\Theta(A_\delta, T_{\lambda}(A_\delta))= \ell^+(x_0, T_\lambda(x_0))+O(\delta)= \lambda+O(\delta),\]
so we have 
\begin{align}\label{integralDeqcont*}
\Dcal_{x_0}(t)\leq \sigma_{\frac{K}{N}}^{(1-t)}(\Theta)\Dcal_{x_0}(0)+\sigma_{\frac{K}{N}}^{(t)}(\Theta)\Dcal_{x_0}(1)- t(1-t)\frac{M\epsilon}{6N}\lambda^2+O(\lambda^4)+O(\delta),
\end{align}
as $\delta\to 0$. 
Next, by definition of $\Dcal_{x_0}(t)$, 
\[\frac{\m^{1/N}(T_{\lambda t}(A_\delta))}{\m^{1/N}(A_\delta)}=\Dcal_{x_0}(t)+O(\delta).\]

Since $\m^{1/N}(A_\delta)\in O(\delta^{n/N})$ this implies that 
\[\m^{1/N}(A_\delta)\Dcal_{x_0}(t)= \m^{1/N}(T_{\lambda t}(A_\delta))+O(\delta^{1+n/N}).\]

Therefore multiplying \eqref{integralDeqcont*} by $\m^{1/N}(A_\delta)$ and bounding the necessary quantities gives that, as $\delta,\lambda\to 0$, 
\begin{align}
\begin{split}
\m^{1/N}(T_{\lambda t}(A_\delta))&\leq \sigma_{K/N}^{(1-t)}(\Theta)\m^{1/N}(A_\delta)+\sigma_{K/N}^{(t)}(\Theta)\m^{1/N}(T_\lambda (A_\delta))\\
&+\left(-t(1-t)\frac{M\epsilon}{6N}\lambda ^2+ O(\lambda^4)+O(\delta)\right)O(\delta^{n/N}).
\end{split}
\end{align}
Finally we use the simple fact that $\sigma_{K/N}^{(t)}(\Theta)\leq \tau_{K,N}^{(t)}(\Theta)$. We conclude that there exists constants $C_1,C_2$ such that for $\lambda, \delta$ sufficiently small, 
\begin{align}
\begin{split}
\m^{1/N}(T_{\lambda t}(A_\delta))&\leq \tau_{K,N}^{(1-t)}(\Theta)\m^{1/N}(A_\delta)+\tau_{K,N}^{(t)}(\Theta)\m^{1/N}(T_\lambda(A_\delta))
+\left(C_1(\delta+\lambda^4)-C_2\lambda^2\right)\delta^{n/N}.
\end{split}
\end{align}
\end{proof}

\subsection{Optimal and geodesic transport comparison}
Now we compare geodesic and optimal transport. The main result of this section is Lemma \ref{optimaltogeodesic}. 
Fix a geodesically convex normal neighbourhood $U$ of $x_0$, on which the inverse of the exponential map $\Log$ is defined. Let 
\[F_t(x,y)\coloneqq \exp(t\Log(x,y)),\]
for all $x,y\in U$. Notice that $T_{\lambda t}(x)=F_{t }(x,T_\lambda (x))$. It follows that 
\[T_{\lambda t}(A_\delta)\subset F_t(A_\delta\times T_\lambda(A_\delta)).\]

Lemma \ref{optimaltogeodesic} will show that this inclusion is not too large in measure. To do this we need to establish two key estimates.

\begin{prop}\label{DF_s}
Fix $(x_0, x_1)\in M\times M$ in the domain of $\Log$ with $x_0\ll x_1$. Let $x_\lambda$ be the unique timelike geodesic which joins them. Through Fermi coordinates along $x_\lambda$, we may identify the family of tangent spaces $T_{x_\lambda}M$ with $\R^n$. Then, in Fermi coordinates about the geodesic $(x_0, x_\lambda)\in M\times M$, and for any $(u,v)\in \R^n\times \R^n$ we have 
\begin{align*}
(DF_t)_{(x_0, x_\lambda)}(u,v)= (1-t) u+t v-\lambda^2\frac{t(1-t)}{6} R_{\log_{x_0}(x_1)}\Big((2-t)u+(1+t)v\Big)+O(\lambda^3),
\end{align*}
as $\lambda\to 0$. 
\end{prop}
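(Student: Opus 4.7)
The plan is to identify $(DF_t)_{(x_0,x_\lambda)}(u,v)$ with the value at time $t$ of a boundary-value Jacobi field and then solve that BVP by perturbation in $\lambda$. By the Jacobi-field description of $DF_t$ recalled in Section \ref{geometrysection}, writing $J(t) := (DF_t)_{(x_0,x_\lambda)}(u,v)$, $J$ is the Jacobi field along the geodesic $t \mapsto F_t(x_0,x_\lambda) = \exp(t\Log(x_0,x_\lambda))$ with boundary data $J(0) = u$ and $J(1) = v$. Since $x_\lambda$ lies on the fixed geodesic from $x_0$ to $x_1$, one has $\log_{x_0}(x_\lambda) = \lambda w$ with $w := \log_{x_0}(x_1)$, so in Fermi coordinates along this geodesic (in which parallel transport is the identity) the tangent vector $\dot F_t$ is constantly $\lambda w$ and the covariant Jacobi equation reduces to the linear $\R^n$-valued ODE
\[
\ddot J(t) + \lambda^2 \mathcal R_\lambda(t) J(t) = 0, \qquad J(0)=u, \quad J(1)=v,
\]
where $\mathcal R_\lambda(t)$ is the paper's curvature operator $R_w$ evaluated at the basepoint $F_t(x_0,x_\lambda)$, and the factor $\lambda^2$ arises from bilinearity of $R$ in $\dot F_t = \lambda w$.

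Next I would Taylor-expand $\mathcal R_\lambda(t)$ in $\lambda$. Because $F_t(x_0,x_\lambda)$ sits within distance $O(\lambda)$ of $x_0$ uniformly in $t \in [0,1]$, smoothness of $R$ gives $\mathcal R_\lambda(t) = R_w|_{x_0} + O(\lambda)$ uniformly, so the $t$-dependence of $\mathcal R_\lambda$ contributes only an $O(\lambda^3)$ correction to the ODE after multiplying by $\lambda^2$. Writing $J(t) = J_0(t) + \lambda^2 J_2(t) + O(\lambda^3)$ and matching orders, at order $\lambda^0$ one gets $\ddot J_0 = 0$ with $J_0(0) = u$, $J_0(1) = v$, hence $J_0(t) = (1-t)u + tv$; at order $\lambda^2$, $J_2$ solves an inhomogeneous ODE driven by the affine $J_0$, with homogeneous Dirichlet conditions $J_2(0) = J_2(1) = 0$.

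Finally, this order-$\lambda^2$ equation integrates in closed form. The forcing term is linear in $t$ with coefficients $R_w u$ and $R_w v$, so two antiderivatives produce a cubic polynomial in $t$; matching the two Dirichlet conditions fixes the two integration constants, and collecting $R_w u$ and $R_w v$ terms (using $2t - 3t^2 + t^3 = t(1-t)(2-t)$ and $t - t^3 = t(1-t)(1+t)$) gives
\[
J_2(t) = -\frac{t(1-t)}{6}\, R_w\bigl((2-t)u + (1+t)v\bigr),
\]
which combined with $J_0$ and the identification $w = \log_{x_0}(x_1)$ yields the stated expansion. The main obstacle is purely bookkeeping: making the Fermi-coordinate reduction of the Jacobi equation precise so that $\dot F_t$ really is constantly $\lambda w$ in coordinates, tracking the sign conventions used in Section \ref{geometrysection} so that the $\lambda^2$ correction enters with the claimed minus sign, and verifying that all truncation errors in the perturbative ansatz and in the Taylor expansion of $\mathcal R_\lambda$ are uniformly $O(\lambda^3)$ on $[0,1]$.
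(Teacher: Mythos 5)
Your proposal follows essentially the same route as the paper's proof: both identify $(DF_t)_{(x_0,x_\lambda)}$ with the boundary-value Jacobi field along the connecting geodesic, note that the curvature term enters at order $\lambda^2$ because the velocity is $\lambda\log_{x_0}(x_1)$, and then solve the second-order perturbation (an inhomogeneous linear ODE with zero Dirichlet data forced by the affine zeroth-order term $(1-t)u+tv$) to obtain the cubic coefficient $-\tfrac{t(1-t)}{6}R_{\log_{x_0}(x_1)}\bigl((2-t)u+(1+t)v\bigr)$, with the paper expanding $M_\lambda(t)=(DF_t)_{(x_0,x_\lambda)}$ in $\lambda$ and showing the first-order term vanishes, exactly as your ansatz assumes. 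The one item to settle is the sign bookkeeping you yourself flagged: as written, your reduced equation $\ddot J+\lambda^2\mathcal{R}_\lambda J=0$ would yield the $\lambda^2$ correction with the opposite sign, whereas the paper's proof works from $\ddot M_\lambda - R_{\dot x_{t\lambda}}M_\lambda=0$ at this step, so the curvature-operator convention must be fixed consistently with Section \ref{geometrysection} before the minus sign in the statement is confirmed.
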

\begin{proof}
The matrix of Jacobi fields $M_\lambda(t):= (DF_t)_{(x,x_\lambda)}$ solves
\begin{align}
\begin{cases}
\ddot M_\lambda(t) -R_{\dot x_{t\lambda}} M_\lambda =0;\\
M_\lambda(0)= \pi_1;\\
M_\lambda(1)= \pi_2,
\end{cases}
\end{align}
where $R_{V}:= \Riem(V,\cdot)V$ for any vector field $V$, and $\pi_i:\R^n\to \R^n\to \R^n$ is the projection onto the $i$-th factor, $i=1,2$. In normal coordinates about $x_0$, we see that $\partial_\lambda x_\lambda:= x'_\lambda = \lambda \log_{x_0}(y_0)$, and so it follows that $R_{\dot x_{t\lambda}}= \lambda^2 \Riem_{\dot x_{t\lambda}}(\log_{x_0}(y_0), \cdot)\log_{x_0}(y_0)$. 

At $\lambda=0$, we have that $M_0(t)=(1-t)\pi_1+t\pi_2$. Taking derivatives with respect to $\lambda$, we see that $M'_0(t):= \partial_\lambda\vert_{\lambda=0} M_\lambda(t)$ solves 
\begin{align}
\begin{cases}
\ddot M_\lambda(t)  =0;\\
M'_0(0)= 0;\\
M'_0(1)= 0.
\end{cases}
\end{align}
Therefore $M'_0(t)\equiv 0$. Moreover $M''_0(t):= \partial_\lambda^2\vert_{\lambda=0}M_\lambda(t)$ solves 
\begin{align}
\begin{cases}
\ddot M''_0(t)  -2R_{\dot x_0} M_0(t)=0;\\
M''_0(0)= 0;\\
M''_0(1)= 0.
\end{cases}
\end{align}
So  $M''_0(t) = -\frac{t(1-t)}{3}R_{\dot x_0}((2-t)\pi_1+(1+t)\pi_2)$. 

It follows that 
\[M_\lambda(t) =(1-t)\pi_1+t\pi_2-\lambda^2\frac{t(1-t)}{6}R_{\dot x_0}((2-t)\pi_1+(1+t)\pi_2)+O(\lambda^3),\]
as desired.
\end{proof}

\begin{prop}\label{Tballestimate}
Let $\rho = \rho(\delta)$ be some polynomial function of $\delta$. Let $\bar g$ be the background Riemannian metric which induces the topology on $M$. Then, in normal coordinates about $x_0$, and for sufficiently small $\lambda$, 
\[\m^{1/N}(B^{\bar g}_{\delta \rho}(T_{\lambda t}(A_\delta))) = \m^{1/N}(T_{\lambda t}(A_\delta)) + O(\delta^{n/N}(\delta+\rho)),\]
as $\delta\to 0$.
\end{prop}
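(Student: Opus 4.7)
The plan is to reduce the statement to a Euclidean Minkowski-content estimate in normal coordinates about $x_0$ and then carefully pass to $N$-th roots. In these coordinates the measure $\m$ has density $\rho_\m(x)\coloneqq e^{-\psi(x)}|\det g(x)|^{1/2}$, a positive $C^1$ function with $\rho_\m(x) = \rho_\m(0)+O(|x|)$, and $\bar g$ is uniformly equivalent to the coordinate Euclidean metric on any fixed compact neighborhood of $0$. The set $T_{\lambda t}(A_\delta)$ is the image of the cube $Q_\delta \subset T_{x_0}M$ under $\Phi_\lambda \coloneqq T_{\lambda t}\circ \exp_{x_0}$, a $C^1$ diffeomorphism whose derivative at the origin converges to $\Id$ as $\lambda\to 0$. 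Consequently, for $\lambda$ sufficiently small, $\Phi_\lambda$ is bi-Lipschitz on a fixed neighborhood of $0$ with constants uniform in $\lambda$, so $T_{\lambda t}(A_\delta)$ is a $C^1$-image of a cube of side $\delta$ whose Lebesgue volume is $\asymp \delta^n$ and whose boundary has $(n-1)$-dimensional Hausdorff area $\asymp \delta^{n-1}$.

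A standard Minkowski tube estimate on this boundary then gives
\[\operatorname{Leb}\bigl(B^{\bar g}_{\delta\rho}(T_{\lambda t}(A_\delta))\bigr) - \operatorname{Leb}(T_{\lambda t}(A_\delta)) = O(\delta^{n-1}\cdot\delta\rho) = O(\delta^n\rho),\]
with implicit constant uniform in small $\lambda$; multiplying by the bounded density $\rho_\m$ yields $\m(B^{\bar g}_{\delta\rho}(T_{\lambda t}(A_\delta))) - \m(T_{\lambda t}(A_\delta)) = O(\delta^n\rho)$. Since $\m(T_{\lambda t}(A_\delta)) \geq c\delta^n$ for some $c>0$ uniformly in small $\lambda$, concavity of $s\mapsto s^{1/N}$ then gives
\[\m^{1/N}\bigl(B^{\bar g}_{\delta\rho}(T_{\lambda t}(A_\delta))\bigr) - \m^{1/N}(T_{\lambda t}(A_\delta)) \leq \tfrac{1}{N}\,\m(T_{\lambda t}(A_\delta))^{1/N-1}\cdot O(\delta^n\rho) = O(\delta^{n/N}\rho).\]
The subleading Taylor corrections to $\rho_\m$ and to the Jacobian of $\Phi_\lambda$ about $x_0$ contribute an $O(\delta^{n/N+1}) = O(\delta^{n/N}\cdot\delta)$ term, and combining the two yields the claimed bound $O(\delta^{n/N}(\delta+\rho))$.

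The main obstacle is ensuring that the implicit constants in the Minkowski tube estimate are uniform in both $\lambda$ and $\delta$. This is delivered by the observation that $\Phi_\lambda$ is a bi-Lipschitz $C^1$ perturbation of the identity on a fixed normal neighborhood for $\lambda$ sufficiently small, so the boundary of $T_{\lambda t}(A_\delta)$ is a smooth image of $\partial Q_\delta$ with area and second-fundamental-form bounds controlled uniformly in $\lambda$; these in turn allow the standard Euclidean tube estimate to be transferred to the $\bar g$-tube around $T_{\lambda t}(A_\delta)$.
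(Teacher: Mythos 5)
Your argument is correct, but it takes a genuinely different route from the paper. The paper works with Steiner's formula: it argues that $T_{\lambda t}(A_\delta)$ is convex with bounded eccentricity, compares the $\bar g$-ball to a Euclidean ball $B_{\Lambda\delta\rho}$, expands $\vol_n(B_{\Lambda\delta\rho}(T_{\lambda t}(A_\delta)))$ in quermassintegrals $W_k = O(\delta^{n-k})$ to get the $O(\delta^n\rho)$ excess, converts Lebesgue measure to $\m$ via Lebesgue differentiation (which is where its extra $O(\delta^{n+1})$, hence the $\delta$ in $(\delta+\rho)$, comes from), and passes to $N$-th roots through the ratio $1+O(\delta+\rho)$. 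You instead exploit that $\Phi_\lambda = T_{\lambda t}\circ\exp_{x_0}$ is uniformly bi-Lipschitz for small $\lambda$, bound the tube excess by the $\delta\rho$-neighborhood of the boundary of a bi-Lipschitz image of a $\delta$-cube, and pass to $N$-th roots by concavity of $s\mapsto s^{1/N}$ together with $\m(T_{\lambda t}(A_\delta))\gtrsim\delta^n$. This buys you something real: you never need convexity of $T_{\lambda t}(A_\delta)$ (a point the paper asserts rather casually, since diffeomorphic images of convex sets need not be convex), and your bound $O(\delta^{n/N}\rho)$ for the tube term is in fact sharper than required. One caveat: your justification of the uniform tube estimate via ``area and second-fundamental-form bounds'' is not literally available, since $V$ is only $C^1$ (so $T_{\lambda t}$ is $C^1$, not $C^2$) and $\partial Q_\delta$ has corners, so the boundary is merely Lipschitz; but this is easily repaired by the covering argument your bi-Lipschitz control already supports---after rescaling by $\delta^{-1}$, cover $\partial Q_1$ by $O(\rho^{-(n-1)})$ balls of radius $\rho$, push forward, and conclude the $\rho$-tube of the boundary has Lebesgue measure $O_{L,n}(\rho)$, uniformly in small $\lambda$. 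With that substitution your proof is complete and, if anything, more robust than the paper's.
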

\begin{proof}
For sufficiently small $\delta$, $T_{\lambda t}(A_\delta)$ stays in a normal neighbourhood about $x_0$. In normal coordinates, let $\vol_n$ denote the Lebesgue measure. For sufficiently small $\lambda$, $T_{\lambda t}$ is a local diffeomorphism, and so in particular, $T_\lambda(A_\delta)$ is convex and has bounded eccentricity as $\delta\to 0$. It follows that there exists some $f(x)>0$ such that 
\[\lim_{\delta\to 0}\frac{\m(B^{\bar g}_{\delta \rho}(T_{\lambda t}(A_\delta)))}{\vol_n(B^{\bar g}_{\delta \rho}(T_{\lambda t}(A_\delta)))} = \lim_{\delta\to 0}\frac{\m(T_{\lambda t}(A_\delta))}{\vol_n(T_{\lambda t}(A_\delta))}= f(T_{\lambda t}(x_0)).\]
Recall that $B^{\bar g}_{\delta \rho}(T_{\lambda t}(A_\delta))$ is contained in the normal neighbourhood $U$ about $x_0$. Taking $\delta$ sufficiently small, we may assume it is contained in a compact subset of $U$, and therefore there is a $\Lambda>0$ independent of $\delta, \lambda$, such that the Riemannian ball $B^{\bar g}_{\delta \rho}(T_{\lambda t}(A_\delta))$ is contained in the Euclidean ball $B_{\Lambda\delta \rho}(T_{\lambda t}(A_\delta))$. Steiner's formula (\cite{Schneider2013}, eq.\ 4.8)  gives that 
\[\vol_n(B_{\Lambda\delta \rho}(T_{\lambda t}(A_\delta))) = \sum_{k=0}^n \binom{n}{k} W_k(T_{\lambda t}(A_\delta))(\Lambda\delta\rho)^k, \]
where the values $W_k(T_{\lambda t}(A_\delta))$ are the $k$th quermassintegrals of $T_{\lambda t}(A_\delta)$. We may express the $k$-th quermassintegral as a mixed volume \cite[eq.\ 5.53]{Schneider2013}, 
\[W_k(K)=V(\underbrace{K,\dots, K}_{\text{$n-k$ times}},\underbrace{B_1(0), \dots, B_1(0)}_{\text{$n$ times}}),\]
for any closed convex set with non-empty interior $K$. Linearity of the mixed volume in each argument \cite[eq.\ 5.17]{Schneider2013} then confirms that $W_k(T_{\lambda t}(A_\delta))\in O(\delta^{n-k})$. Moreover, $W_0(T_{\lambda t}(A_\delta))=\vol_n(T_{\lambda t}(A_\delta))$ \cite[eq.\ 5.8]{Schneider2013}. Hence 
\[\vol_n(B_{\Lambda\delta \rho}(T_{\lambda t}(A_\delta))) = \vol_n(T_{\lambda t}(A_\delta))+\sum_{k=1}^n\binom{n}{k}( \Lambda\delta\rho)^k O(\delta^{n-k})= \vol_n(T_{\lambda t}(A_\delta))+O(\delta^{n}\rho),\]
and therefore 
\begin{align*}
\m(B^{\bar g}_{\delta \rho}(T_{\lambda t}(A_\delta)))& = f(T_{\lambda t}(x_0)) \vol_n(B^{\bar g}_{\delta \rho}(T_{\lambda t}(A_\delta)))+O(\delta^{1+n})\\
&= f(T_{\lambda t}(x_0))\vol_n(T_{\lambda t}(A_\delta))+ O(\delta^n(\delta+\rho)) \\
&= \m(T_{\lambda t}(A_\delta))+O(\delta^n(\delta+\rho)).
\end{align*}
Note that $\m(T_{\lambda t}(A_\delta))\sim \delta^n$, meaning it is $O(\delta^n)$ but not $O(\delta^k)$ for any $k<n$. As a result, we have that 
\[\frac{\m(B^{\bar g}_{\delta \rho}(T_{\lambda t}(A_\delta)))}{\m(T_{\lambda t}(A_\delta))} = 1+ O(\delta+\rho)\implies\frac{\m^{1/N}(B^{\bar g}_{\delta \rho}(T_{\lambda t}(A_\delta)))}{\m^{1/N}(T_{\lambda t}(A_\delta))} = 1+ O(\delta+\rho). \]

Again, $\m^{1/N}(T_{\lambda t}(A_\delta)) \in O(\delta^{n/N})$, and so we obtain
\[\m^{1/N}(B^{\bar g}_{\delta \rho}(T_{\lambda t}(A_\delta))) = \m^{1/N}(T_{\lambda t}(A_\delta))+ O(\delta^{n/N}(\delta+\rho)).\]
\end{proof}
We are now able to prove the lemma at the crux of this section. We remind the reader that $A_\delta$ is the image under $\exp_{x_0}$ of a cube of sidelength $\delta$ spanned by the eigenvectors of $R_{v_0}\coloneqq\Riem_{x_0}(v_0, \cdot )v_0$. 
\begin{lemma}\label{optimaltogeodesic}
Fix $t\in[0,1]$. There exists $C>0$ and $\lambda_0, \delta_0$ such that for all $\lambda<\lambda_0$, $\delta<\delta_0$,
\[\m^{1/N}(F_t(A_\delta, T_t(A_\delta))) \leq \m^{1/N}(T_{\lambda t}(A_\delta))+ C(\delta+ \lambda^3)\delta^{n/N}.\]
\end{lemma}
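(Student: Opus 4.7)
My plan is to establish the inclusion $F_t(A_\delta, T_\lambda(A_\delta)) \subset B^{\bar g}_r(T_{\lambda t}(A_\delta))$ for some $r = O(\delta^2 + \lambda^3\delta)$, whereupon Proposition \ref{Tballestimate} applied with $\rho = O(\delta + \lambda^3)$ yields the claim. Geometrically, $T_{\lambda t}(A_\delta) = \{F_t(z, T_\lambda(z)) : z \in A_\delta\}$ is the diagonal image inside the larger set $F_t(A_\delta, T_\lambda(A_\delta))$, so the task reduces to quantifying the off-diagonal spread.

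I will work in normal coordinates at $x_0$, in which $A_\delta$ is identified with the cube $[-\delta/2,\delta/2]^n$ aligned to the eigenbasis $\{e_i\}$ of $R_{v_0}$ (with eigenvalues $\kappa_i$). For each $(x,x') \in A_\delta^2$ with $u := x - x_0$, $w' := x' - x_0$, I will produce $z = x_0 + \zeta(u,w') \in A_\delta$ satisfying $\bar d(F_t(x, T_\lambda(x')), T_{\lambda t}(z)) = O(\delta^2 + \lambda^3\delta)$. To do so, combine Proposition \ref{DF_s} with the Jacobi-field expansion $(DT_\lambda)_{x_0} = \Id + \lambda\alpha\,\Id - \tfrac{\lambda^2}{2}R_{v_0} + O(\lambda^3)$ to Taylor-expand both $F_t(x_0 + u, T_\lambda(x_0 + w'))$ and $T_{\lambda t}(x_0 + \zeta)$ around $x_{t\lambda}$, to second order in $\lambda$ and first order in the displacement variables, with remainder $O(\delta^2 + \lambda^3\delta)$. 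Here the fact that $(D^2 F_t)_{(x_0,x_0)} = 0$ (since $F_t(x_0+u, x_0+v) = x_0 + (1-t)u + tv + O(\delta^3)$) forces the quadratic Taylor term to contribute only $O(\lambda\delta^2)$, keeping the remainder under control.

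Writing $\zeta = \zeta_0 + \lambda\zeta_1 + \lambda^2\zeta_2$ and matching order-by-order in $\lambda$ yields $\zeta_0 = (1-t)u + tw'$, $\zeta_1 = t(1-t)\alpha(w'-u)$, and $\zeta_2 = -t(1-t)\bigl[t\alpha^2\,\Id + \tfrac{2-t}{3}R_{v_0}\bigr](w'-u)$. Crucially, since $A_\delta$ is aligned to the eigenbasis of $R_{v_0}$, this collapses coordinatewise to $\zeta_i = \tilde a_i u_i + \tilde b_i w'_i$ with $\tilde a_i + \tilde b_i = 1$ and $\tilde a_i, \tilde b_i > 0$ for $\lambda$ small, so each $\zeta_i$ is a convex combination of $u_i, w'_i \in [-\delta/2,\delta/2]$. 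Hence $\zeta \in A_\delta$, and the Riemannian neighborhood bound follows from the pointwise estimate.

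The main obstacle is keeping $z$ inside $A_\delta$ after including the $\lambda^2$ correction: without the eigenbasis alignment, $\lambda^2\zeta_2$ could push $\zeta$ outside $A_\delta$ by $O(\lambda^2\delta)$, which would inflate the neighborhood radius to $O(\lambda^2\delta)$ and produce only an $O(\lambda^2)$ error in the final bound --- insufficient to defeat the $-C_2\lambda^2$ term from Lemma \ref{InteqLemma} in the downstream combination. The choice of $A_\delta$ as a cube aligned with the eigenbasis of $R_{v_0}$ is precisely what makes $\zeta_2$ act diagonally and lets the convex-combination argument go through, delivering the $\lambda^3$ rate in the statement.
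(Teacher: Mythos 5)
Your proposal is correct and follows essentially the same route as the paper: expand $F_t(x,T_\lambda(y))$ in normal coordinates using Proposition \ref{DF_s} together with the Jacobi expansion of $(DT_\lambda)_{x_0}$, exploit that the resulting linear maps are combinations of $\Id$ and $R_{v_0}$ so that the eigenbasis-aligned cube lets you pick $z\in A_\delta$ (coordinatewise a convex combination) matching $T_{\lambda t}(z)$ to within $O((\delta+\lambda^3)\delta)$, and then invoke Proposition \ref{Tballestimate} with $\rho=O(\delta+\lambda^3)$. The only cosmetic difference is that you construct $\zeta$ by an order-by-order expansion in $\lambda$, whereas the paper solves $(M_1+M_2)z=M_1x+M_2y$ exactly; your closing remark about why the eigenbasis alignment is what yields the $\lambda^3$ (rather than $\lambda^2$) rate matches the paper's intent.
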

\begin{proof}

Fix $t\in [0,1]$. If $\ell(x,y)$ is sufficiently small, and $\lambda\ll1$, then $F_t(x,T_\lambda(y))$ stays in $U$, and therefore we can apply Taylor's theorem to $F_t(x,T_\lambda(y))$ in normal coordinates about $x_0\in U$. In such coordinates we write $x_0=0$, and $x_\lambda := T_\lambda(x_0)$. We have
\begin{align*}
\begin{split}
F_t(x,T_\lambda(y))&= F_t(0,x_\lambda)+ (DF_t)_{(0, x_\lambda)}(x, DT_\lambda(y))+ O\left(\big(|x|+|y|\big)^2\right)\\
&=T_{\lambda t}(0)+  (DF_t)_{(0,x_\lambda)}(x, DT_\lambda(y))+ O\left(\big(|x|+|y|\big)^2\right).
\end{split}
\end{align*}
We want to stress two things at this time. The first is that $t$ is fixed, so none of the higher order terms depend on $t$. The second is that the expansion is done relative to the Riemannian metric $\bar g$ which induces the topology. That is, $|x|$ measures the {\it Riemannian} length of $x$, not the Lorentzian length.

Recall that $DT_\lambda$ is the solution map for an initial value Jacobi problem, and so $(DT_\lambda)_x= \Id+\lambda (DV)_x+\frac{\lambda^2}{2}R_{V(x)}+O(\lambda^3)$. In particular 
\begin{align}\label{DT_t}
(DT_\lambda)_{x_0}= (1+\alpha\lambda )\Id +\frac{\lambda^2}{2}R_{v_0}+O(\lambda^3).
\end{align}
 Combining equation \eqref{DT_t} with proposition \ref{DF_s}, we deduce that 
\begin{align*}
\begin{split}
(DF_t)_{(x_0, x_\lambda)}(x, (DT_\lambda)_{x_0} y)=&(1-t)x+t(1+\alpha\lambda)y+\frac{t\lambda^2}{2}R_{v_0}y\\
&-\frac{t(1-t)\lambda^2}{6}R_{v_0}\Big((2-t)x+(1+t)(1+\alpha\lambda)y\Big)+O(\lambda^3(|x|+|y|)).
\end{split}
\end{align*}
Therefore, defining $M_1:= (1-t)\Id-\frac{t(1-t)(2-t)\lambda^2 }{6}R_{v_0}$ and $M_2:= t(1+\alpha \lambda)\Id+\left(\frac{t\lambda^2}{2}-\frac{t(1-t)(1+t)(1+\alpha\lambda)\lambda^2}{6}\right)R_{v_0}$, we see that 
\begin{align}
F_t(x,T_\lambda(y))= T_{\lambda t}(0)+M_1x+M_2y+O\Big(\big(|x|+|y|+\lambda^3\big)\big(|x|+|y|\big)\Big).
\end{align}
  
Since $T_{\lambda t}(x)= F_t(x, T_\lambda(x))$, we then have 
\begin{align}\label{Texpansion}
T_{\lambda t}(x)= T_{\lambda t}(0)+(M_1+M_2)x+O\Big((|x|+\lambda^3)|x|\Big).
\end{align}

Notice that each operator $M_1,M_2, M_1+M_2$ are linear combinations of $\Id$ and $R_{v_0}$, and therefore they can be simultaneously diagonalized by the eigenvectors of $R_{v_0}$. Now, in normal coordinates about $x_0$, $A_\delta$ is a cube of length $\delta$ spanned by the eigenvectors of $R_{v_0}$; it follows that 
\[M_1A_\delta+M_2A_\delta=(M_1+M_2)A_\delta.\]
Or said another way, for every $x, y\in A$, there is $z$ in $A$ such that $(M_1+M_2)z= M_1x+M_2y$. With this choice of $z$, notice that 
\[|F_t(x,T_\lambda(y))- T_{\lambda t}(z)|= O\Big(\big(|x|+|y|+\lambda^3\big)\big(|x|+|y|\big)+(|z|+\lambda^3)|z|\Big),\]
as $\lambda\to 0$ and $|x|+|y|+|z|\to 0$. 
Hence there exists $C>0$, $\lambda_0, \delta_0$ such that for $\lambda<\lambda_0$, $\delta<\delta_0$, 
\[|F_t(x,T_\lambda(y))-T_{\lambda t}(z)|\leq C(\delta+\lambda^3)\delta.\]
We write $\rho\coloneqq C(\delta+\lambda^3)$ and conclude that 
\begin{align}\label{Fball}
F_t(A_\delta\times T_\lambda(A_\delta))\subset B^{\bar g}_{\delta\rho}(T_{\lambda t}(A_\delta)).
\end{align}
Applying proposition \ref{Tballestimate}, we see that 
\begin{align*}
\m^{1/N}(F_t(A_\delta\times T_\lambda(A_\delta)))\leq \m^{1/N}( B^{\bar g}_{\delta \rho}(T_{\lambda t}(A_\delta))) =\m^{1/N}(T_{\lambda t}(A_\delta))+O(\delta^{n/N}(\delta+\rho)).
\end{align*}
Therefore there is $C$ such that for sufficiently small $\lambda$ and $\delta$, 
\begin{align}
\m^{1/N}(F_t(A_\delta\times T_\lambda(A_\delta)))\leq \m^{1/N}(T_{\lambda t}(A_\delta))+C(\delta+\lambda^3)\delta^{n/N},
\end{align}
as desired.
\end{proof}

\subsection{Proof of main theorem}
We now prove the main theorem. 
\begin{proof}[Proof of Theorem \ref{maintheorem}]
 For sufficiently small $\lambda,\delta$, Lemma \ref{InteqLemma} gives 
\begin{align}
\m^{1/N}(T_{\lambda t}(A_\delta))&\leq \tau_{K/N}^{(1-t)}(\Theta)\m^{1/N}(A_\delta)+\tau_{K/N}^{(t)}(\Theta)\m^{1/N}(T_\lambda(A_\delta))
+\left(C_1(\delta+\lambda^4)-C_2\lambda^2\right)\delta^{n/N},
\end{align}
whereas Lemma \ref{optimaltogeodesic} gives 
\begin{align}
\m^{1/N}(F_t(A_\delta\times T_\lambda(A_\delta))) \leq  \m^{1/N}(T_{\lambda t}(A_\delta))+C(\delta+\lambda^3)\delta^{n/N}.
\end{align}
Taking the maximum of $C$ and $C_1$, and relabeling, we obtain:
\begin{align*}
\begin{split}
\m^{1/N}(F_t(A_\delta\times T_\lambda(A_\delta))&\leq \tau_{K,N}^{(1-t)}(\Theta)\m^{1/N}(A_\delta)+\tau_{K,N}^{(t)}(\Theta)\m^{1/N}(T_\lambda(A_\delta))+\left(C_1(\delta+\lambda^3)-C_2\lambda^2\right)\delta^{n/N}.
\end{split}
\end{align*}
With $\lambda\ll 1$ and $\delta\sim \lambda^3$ we may have $C_1(\delta+\lambda^3)-C_2\lambda^2<0$, from which we can conclude that
\begin{align*}
\m^{1/N}(F_t(A_\delta\times T_\lambda(A_\delta))&< \tau_{K,N}^{(1-t)}(\Theta)\m^{1/N}(A_\delta)+\tau_{K,N}^{(t)}(\Theta)\m^{1/N}(T_\lambda(A_\delta)).
\end{align*}
Recalling that $G_t(A_\delta, T_\lambda(A_\delta))\subset F_t(A_\delta\times T_\lambda(A_\delta))$, we finally achieve the desired inequality
\begin{align}
\m^{1/N}(G_t(A_\delta,T_\lambda(A_\delta))&< \tau_{K,N}^{(1-t)}(\Theta)\m^{1/N}(A_\delta)+\tau_{K,N}^{(t)}(\Theta)\m^{1/N}(T_\lambda(A_\delta)).
\end{align}
\end{proof}

\printbibliography[heading=bibintoc]
\end{document}